\theoremstyle{plain}
\newtheorem{theorem}{Theorem}
\newtheorem*{ihara}{Ihara's Theorem}
\newtheorem{proposition}{Proposition}
\newtheorem{corollary}{Corollary}
\newtheorem{lemma}{Lemma}
\theoremstyle{definition}
\newtheorem{definition}{Definition}
\newtheorem{example}{Example}
\newcommand{\R}{\mathbb{R}}
\newcommand{\C}{\mathbb{C}}
\newcommand{\Hidden}[1]{}
\DeclareMathOperator{\vol}{vol}
\DeclareMathOperator{\Ker}{Ker}
\DeclareMathOperator{\Image}{Image}
\begin{document}

\title{Non-backtracking random walks and a weighted Ihara's theorem}
\author{Mark Kempton\footnote{Center of Mathematical Sciences and Applications, Harvard University, Cambridge, MA.  Email: mkempton@cmsa.fas.harvard.edu}}

\date{}

\maketitle

\begin{abstract}
We study the mixing rate of non-backtracking random walks on graphs by looking at non-backtracking walks as walks on the directed edges of a graph.  A result known as Ihara's Theorem relates the adjacency matrix of a graph to a matrix related to non-backtracking walks on the directed edges.  We prove a weighted version of Ihara's Theorem which relates the transition probability matrix of a non-backtracking walk to the transition matrix for the usual random walk.  This allows us to determine the spectrum of the transition probability matrix of a non-backtracking random walk in the case of regular graphs and biregular graphs.  As a corollary, we obtain a result of Alon et. al. in \cite{alon} that in most cases, a non-backtracking random walk on a regular graph has a faster mixing rate than the usual random walk.  In addition, we obtain an analogous result for biregular graphs.  
\end{abstract}

\section{Introduction}

A random walk on a graph $G$ is a random process on the vertices of $G$ in which, at each step in the walk, we choose uniformly at random among the neighbors of the current vertex.  Random walks have been studied extensively, and are used in a variety of algorithms involving graphs.  For a comprehensive survey on random walks on graphs, see \cite{lovasz}, and for applications of spectral techniques to random walk theory, see \cite{fan}.  Random walks on graphs have the useful property that given any initial distribution on the vertex set, the random walk converges to a unique stationary distribution as long as the graph is connected and not bipartite.  The speed at which this convergence takes place is referred to as the \emph{mixing rate} of the random walk.  In a graph where a random walk has a fast mixing rate, vertices can be sampled quickly using this random process, making this a useful tool in theoretical computer science.  

A non-backtracking random walk on a graph is a random walk with the added condition that, on a given step, we are not allowed to return to the vertex visited on the previous step.  Viewed as a walk on vertices, a non-backtracking random walk loses the property of being a Markov chain, making its analysis somewhat more difficult.  However, their study has received increased interest in recent years.  Recently, Angel, Friedman, and Hoory \cite{angel} studied non-backtracking walks on the universal cover of a graph.  Fitzner and Hofstad \cite{Fitzner} studied the convergence of non-backtracking random walks on lattices and tori.  Krzakala et. al. \cite{redemption} use a matrix related to non-backtracking walks to study spectral clustering algorithms.  Most pertinent to the current paper, Alon, Benjamini, Lubetzky, and Sodin \cite{alon} studied the mixing rate of a non-backtracking walk for regular graphs.  In particular, they prove that in most cases, a non-backtracking random walk on a regular graph has a faster mixing rate than a random walk allowing backtracking.  

In this paper, we study the mixing rate for a non-backtracking random walk, with the goal of removing the condition of regularity needed in the results of Alon et. al. in \cite{alon}.  We take a different approach than Alon et. al. by looking at the non-backtracking walk as a walk along directed edges of a graph, as is done in \cite{angel}.  This allows us to turn the non-backtracking random walk into a Markov chain, but on a larger state space, which in turn allows us to determine the stationary distribution to which a non-backtracking walk converges for a general graph, whether or not it is regular.  In the case of regular graphs, our approach allows us to compute the spectrum of the transition probability matrix for a non-backtracking random walk, expressed in terms of the eigenvalues of the adjacency matrix.  This allows for easy comparison of the mixing rates of a non-backtracking random walk, and an ordinary random walk.  As a corollary, this gives us an alternate proof of the result in \cite{alon} for regular graphs.  Our approach gives more information than the approach in \cite{alon}, in that the full spectrum of the transition probability matrix is given.  In addition, we are able to compute the spectrum of the non-backtracking transition probability matrix for biregular graphs.  As a corollary, we generalize the result in \cite{alon} for regular graphs to an analogous result for biregular graphs. 

A key component in our proof is a weighted version of a result known as Ihara's Theorem, also called the Ihara zeta identity, which relates an operator indexed by the directed edge set of a graph to an operator indexed by the vertex set of the graph.  Ihara's Theorem was first considered in the study of number theoretic zeta functions on graphs, and was first proved for regular graphs by Ihara in 1966 (see \cite{ihara}).  Numerous other proofs have been given since, along with generalizations to irregular graphs, by Hashimoto (\cite{hashimoto}, 1989), Bass (\cite{bass}, 1992), Stark and Terras (\cite{stark}, 1996), Kotani and Sunada (\cite{zeta}, 2000), and others.  We will give an elementary proof of Ihara's Theorem that, to our knowledge, is original.  In addition, we follow ideas similar to those in \cite{zeta} to obtain a version of Ihara's Theorem with weights that allows us to study the relevant transition probability matrices for random walks.  

The remainder of this paper is organized as follows.  In section \ref{sec:prelim}, we give the necessary background and preliminary information on random walks, and develop the corresponding theory for non-backtracking walks, including the convergence of a non-backtracking walk to a stationary distribution for a general graph.  We accomplish this via walks on the directed edges of a graph.  We also investigate bounds obtained from the normalized Laplacian for a directed graph.  We also give the relevant background on Ihara's Theorem, and a new elementary proof. In section \ref{sec:ihara}, we prove our weighted version of Ihara's formula.  Finally, in section \ref{sec:rate}, we use this formula to obtain the spectrum of the transition probability matrix for a non-backtracking random walk for regular and biregular graphs.  This gives a new proof of the result of Alon et. al. concerning the mixing rate of a non-backtracking random walk on a regular graph, and generalizes this result to the class of biregular graphs.  

\section{Preliminaries}\label{sec:prelim}

\subsection{Random walks}

Throughout this paper, we will let $G = (V,E)$ denote a graph with vertex set $V$ and (undirected) edge set $E$, and we will let $n = |V|$ and $m = |E|$. A \emph{random walk} on a graph is a sequence $(v_0,v_1,...,v_k)$ of vertices $v_i\in V$ where $v_i$ is chosen uniformly at random among the neighbors of $v_{i-1}$.  Random walks on graphs are well-studied, and considerable literature exists about them.  See in particular \cite{fan} and \cite{lovasz} for good surveys, especially in the use of spectral techniques in studying random walks on graphs.

The \emph{adjacency matrix} $A$ of $G$ is the $n\times n$ matrix with rows and columns indexed by $V$ given by
\[
A(u,v) = \begin{cases}1&\text{ if } u\sim v\\ 0 & \text{ otherwise.} \end{cases}
\] 
It is a well-known fact that the $(u,v)$ entry of $A^k$ is the number of walks of length $k$ starting at vertex $u$ and ending at vertex $v$.  Define $D$ to be the $n\times n$ diagonal matrix with rows and columns indexed by $V$ with $D(v,v) = d_v$, where $d_v$ denotes the degree of vertex $v$.  A random walk on a graph $G$ is a Markov process with transition probability matrix $P = D^{-1}A$, so \[P(u,v) = \begin{cases}\frac{1}{d_u}&\text{ if }u\sim v\\0&\text{ otherwise. }\end{cases}\]  Given any starting probability distribution $f_0$ on the vertex set $V$, the resulting expected distribution $f_k$ after applying $k$ random walk steps is given by $f_k = f_0P^k$.  Here we are considering $f_0$ and $f_k$ as row vectors in $\R^n$.

 Note that, in general, $P$ is not symmetric for an irregular graph, but is similar to the symmetric matrix $D^{-1/2}AD^{-1/2}$.  Thus, the eigenvalues of $P$ are real, and if we order them as $\mu_1\geq \mu_2\geq\cdots\geq\mu_n$, then it is easy to see that $\mu_1 = 1$ with eigenvector $\mathbf1$, and $\mu_n \geq -1$.  By Perron-Frobenius theory, if the matrix $P$ is irreducible, then we have that $\mu_2 <1$, and if $P$ is aperiodic, then $\mu_n > -1$.  The matrix $P$ being irreducible and aperiodic corresponds to the graph $G$ being connected and non-bipartite.  

The \emph{stationary distribution} for a random walk on $G$ is given by \[\pi(v) = \frac{d_v}{\vol(G)}.\]  The stationary distribution has the important property the $\pi P = \pi$, so that a random walk with initial distribution $\pi$ will stay at $\pi$ at each step.  An important fact about the stationary distribution is that if $G$ is a connected graph that is not bipartite, then for any initial distribution $f_0$ on $V(G)$, we have
\[
\lim_{t\rightarrow\infty} (f_0P^t)(v) = \pi(v)
\]
for all $v$ (see \cite{lovasz}).  

Knowing that a random walk will converge to some stationary distribution, a fundamental question to consider is to determine how quickly the random walk approaches the stationary distribution, or in other words, to determine the \emph{mixing rate}.  In order to make this question precise, we need to consider how to measure the distance between two distribution vectors.  

Several measures for defining the mixing rate of a random walk have been given (see \cite{fan}).  Classically, the mixing rate is defined in terms of the pointwise distance (see \cite{lovasz}).  That is, the mixing rate is
\[
\rho = \limsup_{t\rightarrow\infty}\max_{u,v}\left| P^t(u,v) - \pi(v)\right|^{1/t}.
\]
Note that a small mixing rate corresponds to fast mixing.  Alternatively, the mixing rate can be considered in terms of the standard $L_2$ (Euclidean) norm, 
 the \emph{relative pointwise distance},
the \emph{total variation distance},
or the \emph{$\chi$-squared distance}.
In general, these measures can yield different distances, but spectral bounds on the mixing rate are essentially the same for each.  See \cite{fan} for a detailed comparison of each.  For our purposes, we will primarily be concerned with the $\chi$-squared distance, which will be defined below.

The mixing rate of a random walk is directly related to the eigenvalues of $P$.
\begin{theorem}[Corollary 5.2 of \cite{lovasz}]
Let $G$ be a connected non-bipartite graph with transition probability matrix $P$, and let the eigenvalues of $P$ be $1=\mu_1 > \mu_2 \geq \cdots \geq \mu_n > -1$.  Then the mixing rate is $\max\{\mu_2,|\mu_n|\}$.
\end{theorem}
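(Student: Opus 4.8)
The plan is to diagonalize $P$ through its symmetrization and read off the decay rate of $P^t(u,v)-\pi(v)$ directly. Since $P = D^{-1}A = D^{-1/2}N D^{1/2}$ with $N := D^{-1/2}AD^{-1/2}$ symmetric, I would fix an orthonormal basis $\phi_1,\dots,\phi_n$ of eigenvectors of $N$, $N\phi_i = \mu_i\phi_i$, so that $N^t = \sum_{i=1}^n \mu_i^t \phi_i\phi_i^{\top}$ and hence
\[
P^t = D^{-1/2}N^t D^{1/2} = \sum_{i=1}^n \mu_i^t\, D^{-1/2}\phi_i\phi_i^{\top}D^{1/2}.
\]
The first step is to isolate the $i=1$ term: $\phi_1 = \vol(G)^{-1/2}(\sqrt{d_v})_{v\in V}$ is the unit Perron eigenvector, and a one-line computation shows the $(u,v)$ entry of $\mu_1^t D^{-1/2}\phi_1\phi_1^{\top}D^{1/2}$ equals $d_v/\vol(G) = \pi(v)$ for every $t$. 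Therefore
\[
P^t(u,v) - \pi(v) \;=\; \sum_{i=2}^n \mu_i^t\,\frac{\phi_i(u)}{\sqrt{d_u}}\,\sqrt{d_v}\,\phi_i(v).
\]

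For the upper bound, put $\lambda := \max_{2\le i\le n}|\mu_i|$; since the $\mu_i$ are real and linearly ordered, $\lambda = \max\{\mu_2,|\mu_n|\}$ (in the degenerate case $\mu_2<0$ both sides equal $|\mu_n|$). Bounding each summand by $|\mu_i|^t\le\lambda^t$ and applying Cauchy--Schwarz, $\sum_{i=2}^n|\phi_i(u)||\phi_i(v)| \le \big(\sum_{i}\phi_i(u)^2\big)^{1/2}\big(\sum_i\phi_i(v)^2\big)^{1/2} = 1$ because the rows of the orthogonal matrix $[\phi_1\ \cdots\ \phi_n]$ are unit vectors, gives $|P^t(u,v)-\pi(v)| \le \lambda^t\sqrt{d_v/d_u} \le \lambda^t\sqrt{\Delta/\delta}$ with $\Delta,\delta$ the maximum and minimum degree. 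Taking $t$-th roots and $\limsup$ yields $\rho \le \lambda$.

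The part I expect to require the most care is the matching lower bound, since there may be cancellation among the several terms whose eigenvalue has modulus exactly $\lambda$; I would handle this by pairing $P^t(u,\cdot)-\pi$ against an eigenvector rather than estimating entrywise. Choose $j\in\{2,\dots,n\}$ with $|\mu_j|=\lambda$ and a vertex $u$ with $\phi_j(u)\ne0$. Orthonormality gives $\sum_v\big(P^t(u,v)-\pi(v)\big)\phi_j(v)/\sqrt{d_v} = \mu_j^t\,\phi_j(u)/\sqrt{d_u}$, whose absolute value is $\lambda^t|\phi_j(u)|/\sqrt{d_u}$; bounding the left side by $\big(\max_{u',v}|P^t(u',v)-\pi(v)|\big)\sum_v|\phi_j(v)|/\sqrt{d_v}$ produces $\max_{u',v}|P^t(u',v)-\pi(v)| \ge c\,\lambda^t$ for a constant $c>0$ independent of $t$, hence $\rho\ge\lambda$. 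Combining the two inequalities gives $\rho = \max\{\mu_2,|\mu_n|\}$. Finally I would remark that connectedness and non-bipartiteness are used only to guarantee that $\mu_1=1$ is simple and $|\mu_i|<1$ for $i\ge2$, so that $\pi$ is indeed the limiting distribution; they are not otherwise needed to pin down the rate.
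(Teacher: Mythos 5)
Your proof is correct, and the paper itself gives no proof of this statement---it is quoted verbatim from Lov\'asz's survey (Corollary 5.2 there), whose argument is exactly the one you give: diagonalize the symmetrization $D^{-1/2}AD^{-1/2}$, peel off the Perron term to identify $\pi$, bound the remainder by $\lambda^t\sqrt{d_v/d_u}$ for the upper estimate, and pair against an eigenvector with $|\mu_j|=\lambda$ to avoid cancellation in the lower estimate. Nothing to add.
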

Thus, the smaller the eigenvalues of $P$, the faster the random walk converges to its stationary distribution.

\subsection{Non-backtracking random walks}\label{subsec:nb_walk}

  A \emph{non-backtracking random walk} on $G$ is a sequence $(v_0, v_1,...,v_k)$ of vertices $v_i\in V$ where $v_{i+1}$ is chosen randomly among the neighbors of $v_i$ such that $v_{i+1} \neq v_{i-1}$ for $i=1,...,k-1$.  In other words, a non-backtracking random walk is a random walk in which a step is not allowed to go back to the immediately previous state.   A non-backtracking random walk on a graph is not a Markov chain since, in any given state, we need to remember the previous step in order to take the next step.  In order for this to be well-defined, we assume throughout the remainder of the paper that the minimim degree of $G$ is at least 2.  

Define $P^{(k)}$ to be the $n\times n$ transition probability matrix for a $k$-step non-backtracking random walk on the vertices.  That is $P^{(k)}(u,v)$ is the probability that a non-backtracking random walk starting at vertex $u$ ends up at vertex $v$ after $k$ steps.  Note that $P^{(1)} = P$, where $P=D^{-1}A$ is the transition matrix for an ordinary random walk on $G$.  However, $ P^{(k)}$ is not simply $P^k$ since a non-backtracking random walk is not a Markov chain.

This process can be turned into a Markov chain, however, by changing the state space from the vertices of the graph to the directed edges of the graph.  That is, replace each edge in $E$ with two directed edges (one in each direction). Then the non-backtracking random walk is a sequence of directed edges $(e_1,e_2,\cdots,e_k)$ where if $e_i=(v_j,v_k)$, and $e_{i+1}=(v_r,v_s)$ then $v_k=v_r$ and $v_s\neq v_j$.  That is, the non-backtracking condition restricts the walk from moving from an edge to the edge going in the opposite direction. Denote the set of directed edges by $\overrightarrow E$. The transition probability matrix for this process we will call $\tilde P$.  Observe that
\[
\tilde P( (u,v),(x,y) ) = \begin{cases} \frac{1}{d_v-1} & \text{ if } v=x \text{ and } y\neq u\\ 0 & \text{ otherwise}.\end{cases}
\]
Note that $\tilde P$ is a $2m\times 2m$ matrix.  Note also that $\tilde P^k$ is the transition matrix for a walk with $k$ steps on the directed edges.

\begin{lemma}
Given any graph $G$, the matrix $\tilde P$ as defined above is doubly stochastic.
\end{lemma}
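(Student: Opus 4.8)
The plan is to verify directly that each row sum and each column sum of $\tilde P$ equals $1$. Since every entry of $\tilde P$ is nonnegative, this suffices to show double stochasticity. We work with the explicit formula
\[
\tilde P((u,v),(x,y)) = \begin{cases} \frac{1}{d_v-1} & \text{ if } v = x \text{ and } y \neq u, \\ 0 & \text{ otherwise.} \end{cases}
\]
Recall that the standing assumption of minimum degree at least $2$ guarantees $d_v - 1 \geq 1$, so every denominator is a positive integer and the matrix is well defined.

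For the row indexed by a directed edge $(u,v)$, the nonzero entries are exactly those columns $(x,y)$ with $x = v$ and $y$ a neighbor of $v$ other than $u$. There are $d_v$ neighbors of $v$, and excluding $u$ leaves $d_v - 1$ choices of $y$, hence $d_v - 1$ nonzero entries in the row, each equal to $\frac{1}{d_v - 1}$. The row sum is therefore $(d_v - 1)\cdot\frac{1}{d_v-1} = 1$, so $\tilde P$ is stochastic.

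For the column indexed by a directed edge $(x,y)$, I would instead fix the target and count which source edges $(u,v)$ contribute. The condition for a nonzero entry is $v = x$ and $u \neq y$, and in that case the entry equals $\frac{1}{d_v - 1} = \frac{1}{d_x - 1}$. So the source edges contributing to column $(x,y)$ are precisely the edges $(u,x)$ with $u$ a neighbor of $x$ and $u \neq y$; there are again $d_x - 1$ of these, each contributing $\frac{1}{d_x-1}$, giving column sum $1$.

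The argument is essentially a bookkeeping exercise, so there is no serious obstacle; the only point requiring a little care is keeping the two combinatorial counts straight — the row sum "looks forward" from $(u,v)$ and ranges over out-neighbors of $v$, while the column sum "looks backward" into $(x,y)$ and ranges over in-neighbors of $x$ — and in both cases it is the non-backtracking restriction that removes exactly one neighbor, turning the count $d_v$ into $d_v - 1$ and matching the denominator. It is worth noting that this is precisely the feature that fails for the ordinary walk matrix $P = D^{-1}A$, which is stochastic but not doubly stochastic in general; the non-backtracking walk on directed edges is "balanced" in a way the vertex walk is not, which is why its stationary distribution will turn out to be uniform on $\overrightarrow{E}$.
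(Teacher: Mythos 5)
Your proof is correct and follows essentially the same route as the paper: both arguments verify the row sums directly (there are $d_v-1$ admissible out-edges, each with weight $\frac{1}{d_v-1}$) and the column sums by counting the $d_x-1$ in-edges $(u,x)$ with $u\neq y$, each contributing $\frac{1}{d_x-1}$. The only difference is cosmetic — you spell out the row-sum count where the paper simply cites that $\tilde P$ is a transition probability matrix.
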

\begin{proof}
Observe first that the rows of the matrix $\tilde P$ sum to 1, as it is a transition probability matrix.  In addition, the columns of $\tilde P$ sum to 1.  To see this, consider the column indexed by the directed edge $(u,v)$.  The entry of this column corresponding to the row indexed by $(x,y)$ is $\frac{1}{d_y-1}$ if $y=u$ and if $v\neq x$.  Since $y=u$ this is equal to $\frac{1}{d_u-1}$. Otherwise, the entry is 0.  Thus the column sum is \[ \sum_{\substack{x\sim u\\x\neq v}} \frac{1}{d_u-1} = \frac{d_u-1}{d_u-1} =1\] as claimed.
\end{proof}

Define the distribution $\tilde\pi :\overrightarrow E \rightarrow \R$ by
\[
\tilde\pi = \frac{\mathbb 1}{\vol(G)}
\]
where $\mathbb 1$ is the vector of length $2m$ with each entry equal to 1.

\begin{lemma}\label{lem:edge_stationary}
Let $\tilde f_0:\overrightarrow E \rightarrow \R$ be any distribution on the directed edges of $G$.  If the matrix $\tilde P$ is irreducible and aperiodic, then
\[
\tilde f_0\tilde P^k \longrightarrow \tilde\pi
\]
as $k\rightarrow \infty$.
\end{lemma}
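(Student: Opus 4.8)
The plan is to apply the standard convergence theorem for finite Markov chains: if a transition matrix is irreducible and aperiodic, then for any initial distribution the chain converges to the unique stationary distribution, and that stationary distribution is characterized by being a fixed vector of the transition matrix. Since we have already shown in the preceding lemma that $\tilde P$ is doubly stochastic, the uniform vector $\mathbb 1$ satisfies $\mathbb 1 \tilde P = \mathbb 1$ (the column sums being $1$ is exactly this statement), and hence $\tilde\pi = \mathbb 1/\vol(G)$ — which is a genuine probability distribution on $\overrightarrow E$ since $|\overrightarrow E| = 2m = \vol(G)$ — is a stationary distribution for $\tilde P$.

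First I would note that $\tilde P$, being irreducible, has a unique stationary distribution by Perron--Frobenius; combined with the previous paragraph this forces that unique stationary distribution to be $\tilde\pi$. Next I would invoke the convergence theorem (as cited for the vertex case from \cite{lovasz}, but now applied to the chain on $\overrightarrow E$): irreducibility plus aperiodicity guarantees that $1$ is the unique eigenvalue of modulus $1$, so that $\tilde P^k$ converges to the rank-one matrix $\mathbb 1^{\mathsf T}\tilde\pi$ (each row equal to $\tilde\pi$), and therefore $\tilde f_0 \tilde P^k \to \tilde f_0 \mathbb 1^{\mathsf T}\tilde\pi = \tilde\pi$, using that $\tilde f_0$ is a distribution so its entries sum to $1$.

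The only real content beyond quoting the Markov chain convergence theorem is verifying that $\tilde\pi$ is stationary, and this is immediate from double stochasticity: the column-sum computation in the previous lemma says precisely that $\mathbb 1 \tilde P = \mathbb 1$. So there is essentially no obstacle here; the lemma is a direct corollary of the doubly-stochastic property together with the general convergence theorem for irreducible aperiodic Markov chains. If one wanted to be fully self-contained one could instead argue spectrally: a doubly stochastic matrix has spectral radius $1$ with $\mathbb 1$ a left eigenvector, aperiodicity and irreducibility push all other eigenvalues strictly inside the unit disk, and the Jordan form then gives $\tilde P^k \to \mathbb 1^{\mathsf T}\mathbb 1/(2m)$. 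I expect to simply cite the convergence theorem rather than reprove it, mirroring how the vertex-case statement was handled earlier in this section.
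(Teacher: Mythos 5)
Your proposal is correct and follows essentially the same route as the paper: both deduce that $\tilde\pi$ is stationary from the column sums of $\tilde P$ being $1$ (the previous lemma's double stochasticity), and then invoke the standard convergence theorem for irreducible aperiodic finite Markov chains. Your write-up is somewhat more careful in spelling out uniqueness of the stationary distribution via Perron--Frobenius, but the substance is the same.
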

\begin{proof}
It follows from Lemma 1 that $\tilde\pi$ is a stationary distribution for $\tilde P$.  This follows because, since the columns of $\tilde P$ sum to 1, we have
\[
\tilde\pi\tilde P = \tilde\pi.
\]
Therefore, if the sequence $\tilde f_0\tilde P^k$ converges, it must converge to $\tilde\pi$.  Now, $\tilde P$ being irreducible and aperiodic are precisely the conditions for this to converge.
\end{proof}

Let $f$ be a probability distribution on the vertices of $G$.  Then $f$ can be turned into a distribution $\tilde f$ on $\overrightarrow E$ as follows.  Define \[\tilde f((u,v)) = \frac{1}{d_u} f(u).\]  Conversely, given a distribution $\tilde g$ on $\overrightarrow E$, define a distribution $g$ on the vertices by \[g(u) = \sum_{(u,v)\in\overrightarrow E} \tilde g(u,v).\]

Thus, given any starting distribution $f_0:V\rightarrow \R$ on the vertex set of $G$, we can compute the distribution after $k$ non-backtracking random walk steps $f_k:V\rightarrow\R$ as follows.  First compute the distribution $\tilde f_0$ on the directed edges as above, then compute $\tilde f_k = \tilde f_0\tilde P^k$, then $f_k$ is given by $f_k(u) = \sum_{v\sim u} \tilde f_k(u,v)$.  The following proposition tells us that this converges to the same stationary distribution as an ordinary random walk on a graph.

\begin{theorem}
Given a graph $G$ and a starting distribution $f_0:V\rightarrow \R$ on the vertices of $G$, define $f_k = f_0P^{(k)}$ to be the distribution on the vertices after $k$ non-backtracking random walk steps.  Define the distribution $\pi:V\rightarrow \R$ by $\pi(v) = \frac{d_v}{\vol(G)}$ (note that this is the stationary distribution for an ordinary random walk on $G$). Then if the matrix $\tilde P$ is irreducible and aperiodic, then for any starting distribution $f_0$ on $V$, we have
\[
f_k \longrightarrow \pi \text{ as } k\rightarrow\infty.
\]
\end{theorem}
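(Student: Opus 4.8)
\emph{Plan.} The statement follows by transporting the convergence on directed edges (Lemma~\ref{lem:edge_stationary}) back to the vertices through the correspondences $f\mapsto\tilde f$ and $\tilde g\mapsto g$ defined above. First I would record that $\tilde f_0$, defined by $\tilde f_0((u,v))=f_0(u)/d_u$, is an honest probability distribution on $\overrightarrow E$, since
\[
\sum_{(u,v)\in\overrightarrow E}\tilde f_0((u,v))=\sum_{u\in V}\sum_{v\sim u}\frac{f_0(u)}{d_u}=\sum_{u\in V}f_0(u)=1.
\]
The one step that requires care is to check that the directed-edge chain faithfully models the vertex-level non-backtracking walk. Let $(v_0,v_1,\dots)$ be a non-backtracking walk whose starting vertex has law $f_0$ (it is an infinite walk because $G$ has minimum degree at least $2$), and set $e_i:=(v_{i-1},v_i)$. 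The sequence $(e_1,e_2,\dots)$ \emph{is} a Markov chain: given $e_k=(v_{k-1},v_k)$, the next edge $e_{k+1}=(v_k,v_{k+1})$ is chosen uniformly among the directed edges out of $v_k$ other than $(v_k,v_{k-1})$, which is exactly the transition rule defining $\tilde P$. Its initial law, the law of $e_1=(v_0,v_1)$, is $\tilde f_0$, because the first step of a non-backtracking walk is an ordinary, unconstrained random-walk step: from $v_0=u$ (probability $f_0(u)$) the walk goes to each neighbour $v$ with probability $1/d_u$. Hence the law of $e_{k+1}$ equals $\tilde f_0\tilde P^k$ for every $k\ge 0$.

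Since $v_k$ is the source of $e_{k+1}$, this gives, for every vertex $u$ and every $k\ge 0$,
\[
f_k(u)=\Pr[v_k=u]=\sum_{v\sim u}\bigl(\tilde f_0\tilde P^k\bigr)\bigl((u,v)\bigr),
\]
matching the recipe described in the text (for $k=1$ the right-hand side collapses to $(f_0P)(u)$, consistent with $P^{(1)}=P$). Now apply the hypothesis that $\tilde P$ is irreducible and aperiodic: Lemma~\ref{lem:edge_stationary} yields $\tilde f_0\tilde P^k\to\tilde\pi=\mathbb 1/\vol(G)$ entrywise as $k\to\infty$. Because $\overrightarrow E$ is finite, each finite sum above converges to the sum of the limits, so
\[
\lim_{k\to\infty}f_k(u)=\sum_{v\sim u}\tilde\pi\bigl((u,v)\bigr)=\frac{d_u}{\vol(G)}=\pi(u),
\]
which is the desired conclusion.

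\emph{Main obstacle.} Everything past the first paragraph is routine: a continuity argument for a finite sum together with an invocation of Lemma~\ref{lem:edge_stationary}. The only place needing genuine (if short) attention is the identification in the first paragraph — recognizing that the edge sequence is a Markov chain with transition matrix $\tilde P$, and correctly accounting for the asymmetry that the very first step carries no non-backtracking constraint, so that the initial edge law is exactly $\tilde f_0$. I would also note that the two conventions used above (encoding $f_0$ through out-edges, and reading $f_k$ off the out-edges of $\tilde f_0\tilde P^k$) are mutually consistent, as the $k=1$ check confirms.
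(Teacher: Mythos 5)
Your proof is correct and follows essentially the same route as the paper: push $f_0$ to the edge distribution $\tilde f_0$, invoke Lemma~\ref{lem:edge_stationary} to get $\tilde f_0\tilde P^k\to\tilde\pi$, and pull back via $\sum_{v\sim u}\tilde\pi((u,v))=\pi(u)$. The extra care you take in verifying that the edge sequence is a Markov chain with transition matrix $\tilde P$ and initial law $\tilde f_0$ (with the first step unconstrained) is a correct and slightly more explicit justification of what the paper takes for granted.
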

\begin{proof}
As described above, take the distribution $f_0$ on vertices to the corresponding distribution $\tilde f_0$ on directed edges.  Then define $\tilde f_k = \tilde f_0\tilde P^k$.  Then by Lemma \ref{lem:edge_stationary}, $\tilde f_k$ converges to $\tilde\pi$.  Now $\tilde\pi = \frac{\mathbb 1}{\vol G}$, and observe that \[\pi(u) = \frac{d_u}{\vol(G)} = \sum_{v\sim u} \frac{1}{\vol(G)} = \sum_{v\sim u} \tilde\pi((u,v)).\]  So pulling the distribution $\tilde\pi$ on directed edges back to a distribution on the vertices yields $\pi$.  Thus the result follows.
\end{proof}

\Hidden{
\begin{example}
As a simple first example, let $G$ be the diamond graph

\tikzstyle{every node}=[circle, draw, fill=white,
                        inner sep=0pt, minimum width=4pt]
\begin{center}
\begin{tikzpicture}
\draw (0,0)node{} -- (0,2)node{} -- (1,1)node{} -- (0,0)--(-1,1)node{}--(0,2);
\end{tikzpicture}
\end{center}

Then the matrix $P=D^{-1}A = \begin{bmatrix} 0 & 1/2 & 1/2 & 0\\ 1/3 &0&1/3&1/3\\ 1/3&1/3&0&1/3\\0&1/2&1/2&0\end{bmatrix}$.

The eigenvalues of $P$ are $1, 0, -1/3, -2/3$

The matrix $\tilde P = \begin{bmatrix} 0&0&0&1/2&1/2&0&0&0&0&0\\0&0&0&0&0&0&1/2&1/2&0&0\\0&1&0&0&0&0&0&0&0&0\\0&0&0&0&0&1/2&0&1/2&0&0
\\0&0&0&0&0&0&0&0&0&1\\1&0&0&0&0&0&0&0&0&0\\0&0&1/2&0&1/2&0&0&0&0&0\\0&0&0&0&0&0&0&0&1&0
\\0&0&1/2&1/2&0&0&0&0&0&0\\0&0&0&0&0&1/2&1/2&0&0 \end{bmatrix}$

The eigenvalues of $\tilde P$ (computed using Python) are $1, 0.7071, 0.5897,\pm0.7071i, -0.2949\pm0.7071i, -0.5\pm0.5i, -0.7071$.
\end{example}

\begin{example}
Running one simulation on Python of $G(n,p)$ with $n=50, p=1/2$ yields a graph with 50 vertices and 91 edges.  Then $P$ is $ 50\times50$ and $\tilde P$ is $182\times182$.

For $P$ the second largest eigenvalue is 0.358.

For $\tilde P$, the eigenvalue of second largest modulus is $.2\pm.284i$ with modulus 0.3479.
\end{example}

\begin{example}
Consider the graph below.

\tikzstyle{every node}=[circle, draw, fill=white,
                        inner sep=0pt, minimum width=4pt]

\begin{center}
\begin{tikzpicture}
\draw (0,0)node{} -- (1,1)node{} -- (1,-1)node{} -- (0,0)--(-1,1)node{}--(-1,-1)node{}--(0,0);
\end{tikzpicture}
\end{center}

The eigenvalues of $\tilde P$ are $1, 0.6933$ with multiplicity 2,$ -0.6933, -0.5\pm0.866i, -0.34668\pm.6i$ each with multiplicity 3.

Note that $-0.5\pm0.866i$ each have modulus 1.

Note that in a non-backtracking walk on this graph, any cycle has length divisible my 3.  Thus, by well-known results in random walks on directed graphs, the random walk will be periodic and not converge to a stationary distribution.

\end{example}

\begin{example}
Now consider the graph below.

\tikzstyle{every node}=[circle, draw, fill=white,
                        inner sep=0pt, minimum width=4pt]

\begin{center}
\begin{tikzpicture}
\draw (0,0)node{} -- (1,1)node{} -- (1,-1)node{} -- (0,0)--(-1,0)node{}--(-2,-1)node{}--(-2,1)node{}--(-1,0);
\end{tikzpicture}
\end{center}

At first glance, it would seem that this example would be similar to the previous one, as the cycle lengths are identical.  However, there is a non-backtracking walk of length 5, for example, by starting at one of the degree 3 vertices, moving across the central vertex, around the triangle, and then back to the starting vertex.  This walk does not backtrack on an edge in consecutive steps, so it is an allowable non-backtracking walk.  So the gcd of all the cycle lengths is 1, so the random walk converges.

The eigenvalues of $\tilde P$ for this graph are $1, 0.7937$ with multiplicity 2, $0.6068\pm0.4028i, -0.1761\pm0.86i, -.39685\pm.687i$ with multiplicity 2 each, $-0.606\pm0.75778i$, and $-0.6477$.

Note that there is only one eigenvalue of modulus 1.
\end{example}
}

\begin{definition}
The \emph{$\chi$-squared distance} for measuring convergence of a random walk is defined by
\[
\Delta'(t) = \max_{y\in V(G)}\left( \sum_{x\in V(G)} \frac{(\tilde P^t(y,x) - \tilde \pi(x))^2}{\tilde\pi(x)}\right)^{1/2}.
\]
\end{definition}

Notice that since $\tilde \pi = \mathbb1/\vol(G)$,
\[\begin{split}
\Delta'(t)^2 &= \max_y \frac{1}{2m}\| (\chi_y\tilde P^t - \tilde \pi)\|^2 \\  &=\max_y \frac{1}{2m}\| (\chi_y - \tilde \pi)\tilde P^t\|^2
\end{split}\]

\begin{theorem}
Let $\mu_1=1, \mu_2,\cdots,\mu_{2m}$ be the eigenvalues of $\tilde P$.  Then the convergence rate for the non-backtracking random walk with respect to the $\chi$-squared distance is bounded above by $\max_{i\neq1} |\mu_i|.$
\end{theorem}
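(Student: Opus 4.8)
The plan is to exploit the orthogonal splitting $\R^{2m} = \mathrm{span}(\mathbb 1)\oplus W$, where $W = \mathbb 1^{\perp}$, and to show that each summand is invariant under right multiplication by $\tilde P$. Since $\tilde P$ is row stochastic we have $\tilde P\mathbb 1 = \mathbb 1$, and by Lemma 1 it is column stochastic, so $\mathbb 1\tilde P = \mathbb 1$; thus $\mathrm{span}(\mathbb 1)$ is invariant. For the complement, if $v$ is a row vector with $v\perp\mathbb 1$ then $(v\tilde P)\mathbb 1 = v(\tilde P\mathbb 1) = v\mathbb 1 = 0$, so $v\tilde P\in W$ and $W$ is invariant as well. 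Write $Q$ for the operator ``right multiplication by $\tilde P$'' restricted to $W$.

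First I would record what the eigenvalues of $Q$ are. Because $\R^{2m}$ is the direct sum of the two $\tilde P$-invariant subspaces $\mathrm{span}(\mathbb 1)$ and $W$, the characteristic polynomial of $\tilde P$ factors as $(x-1)$ times that of $Q$. Hence the eigenvalues of $Q$ are exactly $\mu_2,\dots,\mu_{2m}$, and the spectral radius of $Q$ is $\rho(Q) = \max_{i\neq 1}|\mu_i|$.

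Next I would check that the relevant error vectors live in $W$. For each $y$ we have $\langle \chi_y,\mathbb 1\rangle = 1$ and $\langle\tilde\pi,\mathbb 1\rangle = \langle\mathbb 1,\mathbb 1\rangle/\vol(G) = 2m/(2m) = 1$, so $\chi_y - \tilde\pi \perp \mathbb 1$, i.e. $\chi_y-\tilde\pi\in W$; and since $\chi_y-\tilde\pi\perp\tilde\pi$ as well, the Pythagorean identity gives $\|\chi_y-\tilde\pi\|^2 = \|\chi_y\|^2 - \|\tilde\pi\|^2 = 1 - 1/(2m) < 1$. Using the displayed expression for $\Delta'(t)^2$ together with $(\chi_y-\tilde\pi)\tilde P^t = (\chi_y-\tilde\pi)Q^t$,
\[
\Delta'(t)^2 \;=\; \max_y \frac{1}{2m}\bigl\|(\chi_y-\tilde\pi)Q^t\bigr\|^2 \;\le\; \frac{1}{2m}\,\|Q^t\|_{\mathrm{op}}^2 ,
\]
where $\|\cdot\|_{\mathrm{op}}$ denotes the operator norm on $W$. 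Taking $t$-th roots and letting $t\to\infty$, Gelfand's spectral radius formula $\lim_{t\to\infty}\|Q^t\|_{\mathrm{op}}^{1/t} = \rho(Q)$ yields $\limsup_{t\to\infty}\Delta'(t)^{1/t} \le \rho(Q) = \max_{i\neq 1}|\mu_i|$, which is the asserted bound on the convergence rate.

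The main obstacle is that $\tilde P$ is in general neither symmetric nor normal, so one cannot diagonalize it in an orthonormal basis and simply expand $\chi_y-\tilde\pi$ in eigenvectors; this is exactly why the argument is routed through the invariant subspace $W$ and the spectral radius formula (equivalently, through a Jordan-form estimate of the form $\|Q^t\|_{\mathrm{op}} = O\bigl(t^{d}\rho(Q)^t\bigr)$). I would also remark that no irreducibility or aperiodicity hypothesis enters the bound itself — those assumptions are only needed afterward to guarantee $\max_{i\neq 1}|\mu_i| < 1$, and hence genuine convergence to $\tilde\pi$.
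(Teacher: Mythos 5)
Your proof is correct and follows essentially the same route as the paper: pass to the complement of the top eigenvector and bound the error by the remaining spectrum. The one substantive difference is that your write-up is more careful than the paper's. The paper asserts the pointwise inequality $\Delta'(t)\leq\frac{1}{2m}\max_{i\neq1}|\mu_i|^t$ directly from the orthogonality of $\chi_y-\tilde\pi$ to $\tilde\pi$, which is not justified for a non-normal matrix such as $\tilde P$ (non-orthogonal eigenvectors or Jordan blocks can inflate $\|v\tilde P^t\|$ beyond $\rho^t\|v\|$ at finite $t$); your detour through the $\tilde P$-invariant subspace $W=\mathbb 1^{\perp}$ and Gelfand's formula $\|Q^t\|_{\mathrm{op}}^{1/t}\to\rho(Q)$ is exactly what is needed to make the asymptotic statement rigorous, at the cost of only proving the bound in the limit rather than for each $t$. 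Your closing remark that irreducibility and aperiodicity are not needed for the bound itself is also accurate.
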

\begin{proof}
We have
\[
\Delta'(t)^2 = \max_y \frac{1}{2m}\| (\chi_y - \tilde \pi)\tilde P^t\|^2.
\]
Observe that $\chi_u -\tilde\pi$ is orthogonal $\tilde \pi$, which is the eigenvector for $\mu_1$, so we see that
\[
\Delta'(t) \leq \frac{1}{2m}\max_{i\neq1} |\mu_i|^t.
\]
Therefore,
\[
\lim_{t\rightarrow\infty}(\Delta'(t))^{1/t}\leq\max_{i\neq1}|\mu_i|.
\]

\end{proof}

\subsection{Non-backtracking Walks as Walks on a Directed Graph}\label{sub:dir_laplacian}

The transition probability matrix $\tilde P$ for the walk on directed edges can be thought of as a transition matrix for a random walk on a directed line graph of the graph $G$.  In this way, theory for random walks on directed graphs can be applied to analyze non-backtracking random walks.  Random walks on directed graphs have been studied by Chung in \cite{directed} by way of a directed version of the normalized graph Laplacian matrix.  In \cite{directed}, the Laplacian for a directed graph is defined as follows.  Let $P$ be the transition probability matrix for a random walk on the directed graph, and let $\phi$ be its Perron vector, that is, $\phi P = \phi$.  Then let $\Phi$ be the diagonal matrix with the entries of $\phi$ along the diagonal.  Then the \emph{Laplacian} for the directed graph is defined as 
\[
\mathcal L = I - \frac{\Phi^{1/2}P\Phi^{-1/2} + \Phi^{-1/2}P^*\Phi^{1/2}}{2}.
\]
This produces a symmetric matrix that thus has real eigenvalues.  Those eigenvalues are then related to the convergence rate of a random walk on the directed graph.  In particular, the convergence rate is bounded above by 
$2\lambda_1^{-1}(-\log\min_x\phi (x))$, where $\lambda_1$ is the second smallest eigenvalue of $\mathcal L$ (see Theorem 7 of \cite{directed}).

Applying this now to non-backtracking random walks, define $\tilde P$ as before.  Then as seen above, $\phi$ is the constant vector with $\phi(v) = 1/\vol(G)$ for all $v$.  Then the directed Laplacian for a non-backtracking walk becomes 
\[
\tilde{ \mathcal L} = I_{2m} = \frac{\tilde P + \tilde P^*}{2}.
\]
Then Theorem 1 of \cite{directed}, applied to the matrix $\tilde{\mathcal L}$ as defined, gives the Rayleigh quotient for a function $f:\overrightarrow E\rightarrow\C$ by
\[
\tilde R(f) = \frac{f^*\tilde{\mathcal L}f}{f^*f} =\frac12\frac{\sum\limits_{(u,v)\in\overrightarrow E(G)}\, \sum\limits_{\substack{(v,w) \\ w\neq u}}\left(f(u,v) -  f(v,w)\right)^2\tilde P((u,v),(v,w))}{\sum\limits_{(u,v) \in \overrightarrow E(G)}f(u,v)^2}.
\]
From this it is clear that $\tilde{\mathcal L}$ is positive semidefinite with smallest eigenvalue $\lambda_0 = 0$.  If $0=\lambda_0\leq\lambda_1\leq\cdots\leq\lambda_{2m-1}$ are the eigenvalues of $\tilde{\mathcal L}$, then Theorem 7 from \cite{directed} implies that the convergence rate for the corresponding random walk is bounded above by
\[
\frac{2\log\vol(G)}{\lambda_1}.
\]

We remark that for an ordinary random walk on an undirected graph $G$, the convergence rate is also on the order of $1/\lambda_1(\mathcal L)$, where $\mathcal L$ now denotes the normalized Laplacian of the undirected graph $G$.  Note that
\[
\lambda_1(\mathcal L) = \inf_{\substack{f:V(G)\rightarrow \R\\ f \perp D\mathbb1}}R(f)
\]
where $\displaystyle R(f) = \frac{\sum_{uv\in E(G)}(f(u) - f(v))^2}{\sum_{v\in V(G)}f(v)^2d_v}$ denotes the Rayleigh quotient with respect to $\mathcal L$, and
\[
\lambda_1(\tilde{\mathcal L}) = \inf_{\substack{ f:\overrightarrow E(G)\rightarrow \R \\ f\perp\mathbb1 }}\tilde R(f)
\]
with $\tilde R$ given above.

The following result shows that the Laplacian bound does not give an improvement for non-backtracking random walks over ordinary random walks.  

\begin{proposition}
Let $G$ be any graph, and let $\mathcal L$ be the normalized graph Laplacian and $\tilde{\mathcal L}$ the non-backtracking Laplacian defined above.  Then we have
\[
\lambda_1(\tilde{\mathcal L}) \leq \lambda_1(\mathcal L).
\]
\end{proposition}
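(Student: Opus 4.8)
The plan is to compare the two eigenvalues through the variational characterizations stated just above, by lifting a test function on the vertices to a test function on the directed edges in a way that preserves the Rayleigh quotient. Recall that
\[
\lambda_1(\mathcal L) = \inf_{\substack{f:V(G)\to\R\\ f\perp D\mathbb1,\ f\neq 0}} R(f), \qquad \lambda_1(\tilde{\mathcal L}) = \inf_{\substack{g:\overrightarrow E(G)\to\R\\ g\perp\mathbb1,\ g\neq 0}} \tilde R(g),
\]
with $R$ and $\tilde R$ as above. So it suffices to produce, for each admissible $f$, an admissible $g$ with $\tilde R(g) = R(f)$: then $\lambda_1(\tilde{\mathcal L}) \leq \tilde R(g) = R(f)$ for every such $f$, and taking the infimum over $f$ yields $\lambda_1(\tilde{\mathcal L}) \leq \lambda_1(\mathcal L)$.

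Given $f$, I would set $g(u,v) = f(u)$, the value of $f$ at the tail of the directed edge $(u,v)$ (using the head value $f(v)$ works equally well). First check admissibility: since $u$ is the tail of exactly $d_u$ directed edges, $\sum_{(u,v)\in\overrightarrow E}g(u,v) = \sum_u d_u f(u) = \langle f, D\mathbb1\rangle = 0$, so $g\perp\mathbb1$; and since every vertex has degree at least $2$, $g$ vanishes only when $f$ does, so $g\neq 0$.

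The heart of the argument is to verify $\tilde R(g) = R(f)$. In the numerator of $\tilde R(g)$, substituting $g(u,v) = f(u)$, $g(v,w) = f(v)$, and $\tilde P((u,v),(v,w)) = \tfrac{1}{d_v-1}$ gives
\[
\frac12\sum_{(u,v)\in\overrightarrow E(G)}\ \sum_{\substack{(v,w)\\ w\neq u}} \bigl(f(u)-f(v)\bigr)^2\,\frac{1}{d_v-1}.
\]
The summand $(f(u)-f(v))^2$ does not depend on $w$, and there are exactly $d_v-1$ admissible continuations $(v,w)$, so the inner sum collapses to $(f(u)-f(v))^2$: the weight $\tfrac{1}{d_v-1}$ cancels against the number of non-backtracking continuations. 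What remains is $\tfrac12\sum_{(u,v)\in\overrightarrow E}(f(u)-f(v))^2 = \sum_{uv\in E}(f(u)-f(v))^2$, since each undirected edge gives two directed edges. The denominator is $\sum_{(u,v)\in\overrightarrow E}g(u,v)^2 = \sum_u d_u f(u)^2$, again because $u$ is the tail of $d_u$ directed edges. Hence $\tilde R(g) = R(f)$, completing the argument.

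I do not expect a genuine obstacle here; the only point requiring care is the bookkeeping in the double sum — namely that the weight $\tfrac{1}{d_v-1}$ exactly cancels the count of admissible continuations $w$, and that each undirected edge is counted twice when passing from $\overrightarrow E(G)$ back to $E(G)$. One might also remark that this construction shows the ordinary Rayleigh quotient embeds into the non-backtracking one, so $\lambda_1(\tilde{\mathcal L})$ can only be smaller and never larger, which is precisely why the spectral computations of Section~\ref{sec:rate}, rather than this Laplacian bound, are the right tool for establishing faster mixing of the non-backtracking walk.
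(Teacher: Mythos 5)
Your proof is correct and is essentially the same as the paper's: both lift a vertex function $f$ to the directed edges via $f'(u,v)=f(u)$, check orthogonality to $\mathbb 1$, and observe that the inner sum over the $d_v-1$ admissible continuations cancels the weight $\tfrac{1}{d_v-1}$ so that $\tilde R(f')=R(f)$. The only cosmetic difference is that the paper applies this to the minimizing $f$ while you take an infimum over all admissible $f$, which is equivalent.
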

\begin{proof}
Let $f:V(G)\rightarrow \R$ be the function orthogonal to $D\mathbb1$ that achieves the minimum in the Rayleigh quotient for $\mathcal L$.  So
\[ \sum_{v\in V(G)} f(v)d_v = 0 \, \text{  and  } \, \lambda_1(\mathcal L) = \frac{\sum_{uv\in E(G)}(f(u) - f(v))^2}{\sum_{v\in V(G)}f(v)^2d_v}.\]
Define $ f':\overrightarrow E \rightarrow \R$ by $f'(u,v) = f(u)$.  Observe that
\[
\sum_{(u,v)\in\overrightarrow E(G)}  f'(u,v) = \sum_{(u,v)\in\overrightarrow E(G)} f(u) = \sum_{u\in V(G)}f(u)d_u = 0.
\]
So $f'$ is orthogonal to $\mathbb1$.  Therefore
\[\begin{split}
\lambda_1(\tilde{\mathcal L}) \leq \tilde R( f') &= \frac12\frac{\sum\limits_{(u,v)\in\overrightarrow E(G)}\, \sum\limits_{\substack{(v,w) \\ w\neq u}}\left(f'(u,v) -  f'(v,w)\right)^2\tilde P((u,v),(v,w))}{\sum\limits_{(u,v) \in \overrightarrow E(G)}f'(u,v)^2}\\
&=\frac12\frac{ \sum\limits_{(u,v)}\sum\limits_{\substack{(v,w)\\w\neq u }} \left(f(u) - f(v)\right)^2\frac{1}{d_v-1}}{\sum\limits_{(u,v)}f(u)^2}\\
&=\frac12\frac{\sum\limits_{(u,v)}\left(f(u) - f(v)\right)^2}{\sum\limits_{u\in V(G)} f(u)^2d_u}\\
&= \frac{\sum\limits_{\{u,v\}\in E(G)}\left(f(u) - f(v)\right)^2}{\sum\limits_{u\in V(G)}f(u)^2d_u} = R(f) = \lambda_1(\mathcal L).
\end{split}\]
\end{proof}

\subsection{Ihara's Theorem}

The transition probability matrix $\tilde P$ defined above is a weighted version of an important matrix that comes up in the study of zeta functions on finite graphs.  We define $B$ to be the $2m\times 2m$ matrix with rows and columns indexed by the set of directed edges of $G$ as follows.
\[
B( (u,v),(x,y) ) = \begin{cases} 1 & \text{ if } v=x \text{ and } y\neq u\\ 0 & \text{ otherwise}.\end{cases}
\]

The matrix $B$ can be thought of as a non-backtracking edge adjacency matrix, and the entries of $B^k$ describe the number of non-backtracking walks of length $k$ from one directed edge to another, in the same way that the entries of powers of the adjacency matrix, $A^k$, count the number of walks of length $k$ from one vertex to another. The expression $\det(I-uB)$ is closely related to zeta functions on finite graphs which.  A result known as Ihara's Theorem further relates such zeta functions to a determinant expression involving the adjacency matrix.  While we will not go into zeta functions on finite graphs in this paper, the following result equivalent to Ihara's theorem will be of interest to us.  \\

\begin{ihara}  For a graph $G$ on $n$ vertices and $m$ edges, let $B$ be the matrix defined above, let $A$ denote the adjacency matrix, $D$ the diagonal degree matrix, and $I$ the identity.  Then
\[
\det(I-uB) = (1-u^2)^{m-n}\det(I-uA+u^2(D-I)).
\]
\end{ihara}

  We remark that the expression $\det(I-uB)$ is the characteristic polynomial of $B$ evaluated at $1/u$.  In this way the complete spectrum of the matrix $B$ is given by the reciprocals of the roots of the polynomial $(1-u^2)^{m-n}\det(I-uA+u^2(D-I))$.  Numerous proofs of this result exist in the literature \cite{ihara,hashimoto,bass,stark,zeta}.  For completeness, we will include here an elementary proof that uses only basic linear algebra.  To the knowledge of the author, this proof is original.  To begin, we will need a lemma giving a well-known property of determinants.

\begin{lemma}\label{lem:order}
Let $M$ be a $k\times l$ matrix, $N$ a $l\times k$ matrix, and $A$ an invertible $k\times k$ matrix. Then 
\[
\det(A+MN) = \det(A)\det(I+NA^{-1}M).
\]
\end{lemma}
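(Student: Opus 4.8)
The plan is to prove this by a block-matrix computation: form a single $(k+l)\times(k+l)$ matrix, factor it as a product of block-triangular matrices in two different ways, and compare the two resulting expressions for its determinant. Concretely, set
\[
C = \begin{pmatrix} A & -M \\ N & I_l \end{pmatrix},
\]
where $I_l$ is the $l\times l$ identity, and recall the only fact needed: the determinant of a block-triangular matrix is the product of the determinants of its diagonal blocks.

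First I would exploit the invertibility of the top-left block $A$ to write
\[
C = \begin{pmatrix} A & 0 \\ N & I_l \end{pmatrix}\begin{pmatrix} I_k & -A^{-1}M \\ 0 & I_l + NA^{-1}M \end{pmatrix},
\]
which is checked by multiplying out the right-hand side. Taking determinants gives $\det C = \det(A)\,\det\!\left(I_l + NA^{-1}M\right)$. Next I would factor $C$ using the (trivially invertible) bottom-right block $I_l$:
\[
C = \begin{pmatrix} I_k & -M \\ 0 & I_l \end{pmatrix}\begin{pmatrix} A + MN & 0 \\ N & I_l \end{pmatrix},
\]
again verified by direct multiplication, which yields $\det C = \det(A + MN)$. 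Equating the two evaluations of $\det C$ proves the lemma; here the ``$I$'' in the statement is the $l\times l$ identity $I_l$, and one reads the matrix sizes accordingly.

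I do not expect any genuine obstacle here — the argument is routine. The only points demanding a little care are bookkeeping ones: verifying that each of the two block factorizations holds \emph{exactly} (in particular getting the sign right, which is why $C$ has $-M$ rather than $M$ in the off-diagonal block), and keeping the two identity blocks $I_k$ and $I_l$ straight since they have different sizes, so that the final identity genuinely relates the $l\times l$ determinant $\det(I_l+NA^{-1}M)$ on the right to the $k\times k$ determinant $\det(A+MN)$ on the left. An alternative would be to first reduce to $A=I_k$ via $A+MN = A(I_k + A^{-1}MN)$ and then quote Sylvester's determinant identity $\det(I_k+XY)=\det(I_l+YX)$, but since that identity is proved by exactly the same block trick, it is cleaner to carry out the one-step argument above directly.
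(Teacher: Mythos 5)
Your proof is correct and is essentially the same argument as the paper's: both rest on block-triangular factorizations of a $(k+l)\times(k+l)$ matrix assembled from $A$, $M$, and $N$, and both read off the determinant as a product of diagonal blocks. The paper packages this as a single three-factor identity whose two sides are block triangular, whereas you evaluate $\det C$ in two ways via the two Schur complements; the underlying idea is identical and your bookkeeping (signs, block sizes $I_k$ versus $I_l$) checks out.
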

\begin{proof}
Note that
\[
\begin{bmatrix}I&N\\0&I \end{bmatrix}\begin{bmatrix}I&0\\A^{-1}M&A^{-1}(A+MN)\end{bmatrix}\begin{bmatrix}I &-N \\ 0&I\end{bmatrix} = \begin{bmatrix}I+NA^{-1}M&0\\A^{-1}M&I\end{bmatrix}.
\]
Taking determinants of both sides gives the result.
\end{proof}
\noindent{\it Proof of Ihara's Theorem.}  Define $S$ to be the $2m\times n$ matrix
\[
S((u,v),x) = \begin{cases} 1 & \text{ if } v=x \\ 0 & \text{ otherwise}\end{cases}
\]
so $S$ is the endpoint incidence operator.  Define $T$ to be the $n\times 2m$ matrix given by
\[
T(x,(u,v)) = \begin{cases} 1& \text{ if } u=x \\ 0 & \text{ otherwise}\end{cases}
\]
so $T$ is the starting point incidence operator.  We will also define $\tau$ to be the $2m\times 2m$ matrix giving the reversal operator that switches a directed edge with its opposite.  That is,
\[
\tau((a,b),(c,d)) = \begin{cases} 1 & \text{ if } b=c, a=d \\ 0 & \text{ otherwise}\end{cases}
\]

Now, a straightforward computation verifies that 

\begin{equation}\label{eq:B}
B = ST - \tau,
\end{equation}
\begin{equation}\label{eq:A}
A = TS,
\end{equation}
and
\begin{equation}\label{eq:D}
D = T\tau S.
\end{equation}
 
Then from Lemma \ref{lem:order} and (\ref{eq:B}) we obtain

\[\begin{split}
\det(I-uB) &= \det(I-u(ST-\tau))\\
&= \det(I+u\tau - uST)\\
&= \det(I+u\tau)\det(I-uT(I+u\tau)^{-1}S)
\end{split}\]
where $u$ is chosen so that the matrix $I+u\tau$ is inverivle.  

Observe that $\tau^2 = I$, so that $(I-u\tau)(I+u\tau) = (1-u^2)I$, so $(I+u\tau)^{-1} = \frac{1}{1-u^2}(I-u\tau)$.  Thus, applying (\ref{eq:A}) and (\ref{eq:D}), the above becomes 

\[\begin{split}
\det(I-uB) &= \det(I+u\tau)\det(I-\frac{u}{1-u^2}T(I-u\tau)S)\\
&=\det(I+u\tau)\det(I-\frac{u}{1-u^2}(TS - uT\tau S))\\
&= \det(I+u\tau)\frac{1}{(1-u^2)^n}\det((1-u^2)I - uA + u^2D)\\
&=(1-u^2)^{m-n}\det(I - uA + u^2(D-I))
\end{split}\]
where the last step is obtained by observing that $\det(I+u\tau) = (1-u^2)^m$.  This is the desired equality for our choice of $u$.  This is a polynomial of finite degree in $u$, and there are infinitely many $u$ that make $I+u\tau$ invertible, so the equality holds for all $u$. 

\qed


\section{A weighted Ihara's theorem}\label{sec:ihara}
  In this section, we will give a weighted version of Iharra's Theorem.  The proof presented in the previous section does not lend itself well to generalization to the weighted setting, so we will not follow that strategy.  Rather, we will follow the main ideas of the proof of Ihara's theorem found in \cite{zeta} to obtain our weighted version of this result.

To each vertex $x\in V(G)$ we assign a weight $w(x)\neq0$, and let $W$ be the $n\times n$ diagonal matrix given by $W(x,x) = w(x)$. Define $S$ and $T$ to be the matrices from the proof of Ihara's Theorem in the previous section, and define $\tilde S = SW$ and $\tilde T = WT$.  So $\tilde S$ is the weighted version of the endpoint vertex-edge incidence operator, and $\tilde T$ is the weighted version of the starting point vertex-edge incidence operator.  Define $\tau$ from the proof of Ihara's Theorem, 
and define $\tilde\tau$ to be the weighted version of $\tau$, that is
\[
\tilde\tau((a,b),(c,d)) = \begin{cases} w(b)^2 & \text{ if } b=c, a=d \\ 0 & \text{ otherwise}\end{cases}
\]
Finally, define the $2m\times 2m$ matrix $\tilde P$ by
\begin{equation}\label{eq:Ptilde}
\tilde P((a,b),(c,d)) =  \begin{cases} w(b)^2 & \text{ if } b=c, a\neq d \\ 0 & \text{ otherwise.}\end{cases}
\end{equation}
Then $\tilde P$ is the weighted version of the non-backtracking edge adjacency matrix $B$ seen above in Ihara's theorem, with $w(b)^2$ the weight on edge $(a,b)$.  We remark that if we take $w(x) = 1/\sqrt{d_x-1}$ for each $x\in V(G)$, then $\tilde P$ is exactly the transition probability matrix for a non-backtracking random walk on the directed edges of $G$ defined in Section \ref{subsec:nb_walk}.  This case is our primary focus, but we note that our computations apply for any arbitrary positive weights assigned to the vertices.

Now, a straightforward computation verifies that

\begin{equation}\label{eq:ST}
\tilde P = \tilde S\tilde T - \tilde\tau
\end{equation}
and
\begin{equation}\label{eq:TS}
\tilde T\tilde S = WAW.
\end{equation}
We will define $\tilde A = WAW$.  Note that $\tilde A(u,v) = w(u)w(v)$, so this is the adjacency matrix for the weighted graph with edge weights $w(u)w(v)$.  The matrix $\tilde A$ is similar to $W^2 A$, so when $w(x) = 1/\sqrt{d_x-1}$, this is the matrix whose entries are the transition probabilities for a single step of a non-backtracking random walk $G$.  

From (\ref{eq:ST}) and (\ref{eq:TS}) we obtain the following equations.
\begin{eqnarray}
(I-u\tilde P)(I-u\tilde\tau)& = I-u\tilde S\tilde T+u^2\tilde S\tilde T\tilde\tau-u^2\tilde\tau^2\label{eq:I-P}\\
(I-u\tilde\tau)(I-u\tilde P) &= I-u\tilde S\tilde T+u^2\tilde\tau\tilde S\tilde T-u^2\tilde\tau^2
\end{eqnarray}

We define $\tilde D$ to be the diagonal $n\times n$ matrix $\tilde D(x,x) = \sum_{v\sim x}w(x)^2w(v)^2$ and observe that $\tilde T\tilde\tau \tilde S = \tilde D$.  It then follows that
\begin{eqnarray}
\left((I-u\tilde P)(I-u\tilde \tau)+u^2\tilde\tau^2\right)\tilde S &=\tilde S\left(I - u\tilde A+u^2\tilde D\right) \label{eq:S}\\
\tilde T\left((I-u\tilde\tau)(I-u\tilde P)+u^2\tilde\tau^2\right) &= \left(I - u\tilde A+u^2\tilde D\right)\tilde T \label{eq:T}
\end{eqnarray}

We remark that in the proof in \cite{zeta}, they use the unweighted versions of each of these matrices, so $\tau$ rather than $\tilde\tau$ yields $\tau^2= I$. Hence $S$ and $T$ will factor through $\tau^2$, so that the $u^2\tau^2$ term stays on the right hand side of the above equations.  Here we have $\tilde\tau^2$ is a $2m\times 2m$ diagonal matrix with $\tilde\tau^2((u,v),(u,v)) = w(u)^2w(v)^2$.  Depending on the $w(u)$'s this matrix might not behave nicely with respect to the action of $S$ and $T$, hence the extra terms that need to stay on the left-hand side above.  This difference from \cite{zeta} is one of the primary difficulties in generalizing this result.

\Hidden{
We will now need the following Lemma.

\begin{lemma}\label{lemma:direct_sum}
 The $2m$-dimensional space of functions $f:\overrightarrow E \rightarrow \C$ from the set of directed edges of $G$ to the complex numbers can be expressed as the direct sum of the subspaces
\[\Image \tilde S \bigoplus \Ker \tilde T\tilde\tau.\]
\end{lemma}
\begin{proof}
Observe that, by construction, the operators $\tilde S$ and $\tilde T$ both have rank $n$, and $\tau$ is bijective (recall that $w(x) = 0$ for all $x\in V(G)$.  Therefore

\[
\dim \Image \tilde S + \dim \Ker\tilde T\tilde\tau = \Image \tilde S+\dim\Ker \tilde T = n + (2m-n) = 2m.
\]

It remains to check that $\Image S\cap\Ker T\tilde\tau = \{0\}$.  Consider $g = \tilde Sf \in \Image\tilde S$, and note $\tilde T\tilde\tau g = \tilde T\tilde\tau\tilde S f $.  Recall that $\tilde T\tilde\tau\tilde S = \tilde D$ is a diagonal matrix with non-zero entries along the diagonal, so we see that $g \in \Ker\tilde T\tilde \tau$ if and only  $g = 0$.   So we have the direct sum decomposition as claimed in the lemma.

\end{proof}
}

We will now perform a change of basis to see how the operator $(I-u\tilde P)(I-u\tilde\tau)+u^2\tilde\tau^2$ behaves with respect to the decomposition of the space of functions  $f:\overrightarrow E \rightarrow \C$ as the direct sum of $\Image \tilde S$ and $\Ker \tilde S^T$.  To this end, fix any basis of the subspace $\Ker \tilde S^T$, and let $R$ be the $2m\times(2m-n)$ matrix whose columns are the vectors of that basis (note that $\tilde S$ has rank $n$).  Define $\displaystyle M = \begin{bmatrix}\tilde S & R\end{bmatrix}$.  This will be our change of basis matrix.  To obtain the inverse of $M$, form the matrix $\displaystyle \begin{bmatrix} (\tilde S^T\tilde S)^{-1}\tilde S^T\\(R^TR)^{-1}R^T\end{bmatrix}$ and observe that
\[
\begin{bmatrix}(\tilde S^T\tilde S)^{-1}\tilde S^T\\(R^TR)^{-1}R^T\end{bmatrix}\begin{bmatrix}\tilde S & R\end{bmatrix} = \begin{bmatrix} (\tilde S^T\tilde S)^{-1}\tilde S^T\tilde S & (\tilde S^T\tilde S)^{-1}\tilde S^T R\\(R^TR)^{-1}R^T\tilde S& (R^TR)^{-1}R^TR\end{bmatrix} = \begin{bmatrix}I_n&0\\0&I_{2m-n}\end{bmatrix}.
\]
Therefore we have that $M^{-1} = \displaystyle \begin{bmatrix}(\tilde S^T\tilde S)^{-1}\tilde S^T\\(R^TR)^{-1}R^T\end{bmatrix}$.

Applying this change of basis, direct computation, applying (\ref{eq:I-P}) and (\ref{eq:S}), yields

\begin{equation}\label{eq:decomp}
\begin{bmatrix}(\tilde S^T\tilde S)^{-1}\tilde S^T\\(R^TR)^{-1}R^T\end{bmatrix}\left((I-u\tilde P)(I-u\tilde\tau)+u^2\tilde\tau^2\right)\begin{bmatrix}\tilde S& R\end{bmatrix} = \begin{bmatrix}I-u\tilde A +u^2\tilde D&-u\tilde T R + u^2\tilde T\tilde\tau R\\0&I  \end{bmatrix}.
\end{equation}

Therefore, the matrix $(I-u\tilde P)(I-u\tilde\tau)+u^2\tilde\tau^2$ is similar to the matrix $\begin{bmatrix}I-u\tilde A +u^2\tilde D&-u\tilde T R + u^2\tilde T\tilde\tau R\\0&I  \end{bmatrix}$, so they have the same determinant.  Thus, we have proven a weighted version of Ihara's Theorem, which we state as the following.

\begin{theorem}\label{thm:weighted_ihara}
Let $G$ be a graph on $n$ vertices and $m$ edges, and assign an arbitrary positive weight $w(x)>0$ assigned to each vertex $x$. Let $\tilde P$ be the $2m\times 2m$ weighted non-backtracking edge adjacency matrix with edge weight $w(v)^2$ assigned to edge $(u,v)$ as defined in (\ref{eq:Ptilde}).  Let $\tilde A$ be the weighted $n\times n$ adjacency matrix with edge weight $w(u)w(v)$ assigned to each edge. Let $\tilde \tau$ be the weighted reversal operator defined above, and $\tilde D$ the $n\times n$ diagonal matrix with $\tilde D(x,x) = \sum_{v\sim x}w(x)^2w(v)^2$ as defined above. Then we have
\[
\det\left((I-u\tilde P)(I - u\tilde \tau) + u^2\tilde\tau^2\right) = \det( I - u \tilde A + u^2\tilde D).
\]
\end{theorem}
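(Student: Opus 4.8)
The plan is to reduce the determinant of the $2m\times 2m$ operator on the left-hand side to the determinant of an $n\times n$ block by a change of basis adapted to the splitting $\C^{2m} = \Image\tilde S \oplus \Ker\tilde S^T$, exactly as the displays (\ref{eq:ST})--(\ref{eq:decomp}) are building toward. The first step is to record the bookkeeping identities $\tilde P = \tilde S\tilde T - \tilde\tau$ from (\ref{eq:ST}), $\tilde T\tilde S = \tilde A$ from (\ref{eq:TS}), and the observation $\tilde T\tilde\tau\tilde S = \tilde D$; each is a one-line check from the entrywise definitions of $\tilde S$, $\tilde T$, $\tilde\tau$, $\tilde P$. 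Substituting $\tilde P = \tilde S\tilde T - \tilde\tau$ into $(I-u\tilde P)(I-u\tilde\tau)$ and expanding gives $I - u\tilde S\tilde T + u^2\tilde S\tilde T\tilde\tau - u^2\tilde\tau^2$, so that
\[
(I-u\tilde P)(I-u\tilde\tau) + u^2\tilde\tau^2 = I - u\tilde S\tilde T + u^2\tilde S\tilde T\tilde\tau,
\]
and right-multiplying by $\tilde S$ turns the right-hand side into $\tilde S\bigl(I - u\tilde T\tilde S + u^2\tilde T\tilde\tau\tilde S\bigr) = \tilde S\bigl(I - u\tilde A + u^2\tilde D\bigr)$, which is precisely (\ref{eq:S}). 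The whole purpose of the correction term $u^2\tilde\tau^2$ is to cancel the $-u^2\tilde\tau^2$ created by the expansion: since $\tilde\tau^2$ is the diagonal matrix with entries $w(u)^2w(v)^2$ rather than the identity, it does not factor through $\tilde S$ and $\tilde T$, and this is the one genuine obstruction — it is exactly why the theorem cannot be stated in the clean classical Ihara form.

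Next I would fix a basis of $\Ker\tilde S^T$, assemble it into a $2m\times(2m-n)$ matrix $R$, and set $M = \begin{bmatrix}\tilde S & R\end{bmatrix}$. Because $\tilde S = SW$ with $W$ invertible, $\tilde S$ has full column rank $n$, so $\tilde S^T\tilde S$ is invertible, $\Image\tilde S$ meets $\Ker\tilde S^T$ only in $0$, and $M$ is invertible with $M^{-1} = \begin{bmatrix}(\tilde S^T\tilde S)^{-1}\tilde S^T\\(R^TR)^{-1}R^T\end{bmatrix}$, as displayed before the theorem statement. I would then conjugate $(I-u\tilde P)(I-u\tilde\tau)+u^2\tilde\tau^2$ by $M$. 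On the columns coming from $\tilde S$, identity (\ref{eq:S}) shows the operator sends $\tilde S$ to $\tilde S(I-u\tilde A+u^2\tilde D)$; applying $M^{-1}$ and using $R^T\tilde S = 0$ gives the first block column $\begin{bmatrix} I-u\tilde A+u^2\tilde D\\ 0\end{bmatrix}$. On the columns coming from $R$, expanding the simplified operator $I - u\tilde S\tilde T + u^2\tilde S\tilde T\tilde\tau$ applied to $R$ and using $\tilde S^T R = 0$ together with $R^T\tilde S = 0$ collapses the lower-right block to $I_{2m-n}$ and leaves an upper-right block $-u\tilde T R + u^2\tilde T\tilde\tau R$, yielding precisely the block-triangular form (\ref{eq:decomp}).

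Finally, the conjugated matrix is block upper triangular with diagonal blocks $I-u\tilde A+u^2\tilde D$ and $I_{2m-n}$, so its determinant is $\det(I-u\tilde A+u^2\tilde D)$; since conjugation by the invertible matrix $M$ preserves determinants, we conclude
\[
\det\bigl((I-u\tilde P)(I-u\tilde\tau)+u^2\tilde\tau^2\bigr) = \det(I-u\tilde A+u^2\tilde D),
\]
which is the asserted identity. No restriction on $u$ is needed, since $M$ is invertible for every value of $u$; alternatively one observes that both sides are polynomials in $u$ that agree everywhere. The only place requiring care beyond routine matrix algebra is the verification of (\ref{eq:S}) and the parallel computations inside the change of basis, where one must track how the $\tilde\tau^2$ term fails to simplify and confirm that it cancels rather than obstructs.
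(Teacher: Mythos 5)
Your proposal is correct and follows essentially the same route as the paper: the identities $\tilde P=\tilde S\tilde T-\tilde\tau$, $\tilde T\tilde S=\tilde A$, $\tilde T\tilde\tau\tilde S=\tilde D$, the cancellation of $-u^2\tilde\tau^2$ by the added correction term, and the change of basis $M=\begin{bmatrix}\tilde S& R\end{bmatrix}$ producing the block upper-triangular form (\ref{eq:decomp}) are exactly the steps in the text. Nothing is missing, and your closing remark that no restriction on $u$ is needed (since $M$ does not depend on $u$) is a correct observation.
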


As a corollary to the decomposition in equation (\ref{eq:decomp}), if we take $w(x) = 1$ for all $x$, then $\tilde\tau^2 = I$, and the usual unweighted Ihara's Theorem falls out immediately.  

If we take $w(x) = \frac{1}{\sqrt{d_x-1}}$, then $\tilde P$ becomes the transition probability matrix for the non-backtracking walk on directed edges, and $\tilde A(u,v) = \frac{1}{\sqrt{(d_u-1)(d_v-1)}}$.  This is clearly similar to the matrix $(D-I)^{-1}A$.  So in this case $\tilde A$ is similar to the matrix whose entries are the transition probabilities for a single step in a non-backtracking random walk.  (Note, however, that $(D-I)^{-1}A$ is not the transition probability matrix for a non-backtracking random walk.)

\section{The mixing rate of non-backtracking random walks}\label{sec:rate}

\subsection{An alternate proof for regular graphs}

Applying the results of the previous section to regular graphs yields a different proof of the results from \cite{alon} on the mixing rate of non-backtracking random walks on regular graphs.

Let $G$ be a regular graph where each vertex has degree $d$.  Then choosing $w(x)  = 1/\sqrt{d-1}$ for all $x$ yields gives us that $\tilde P$ is the transition probability matrix for the non-backtracking random walk on $G$.  We remark that, from the previous section, we have $\tilde \tau = \frac{1}{d-1}\tau$, $\tilde \tau^2 = \frac{1}{(d-1)^2}I$, $\tilde A = \frac{1}{d-1}A$, and $\tilde D = \frac{d}{(d-1)^2}I$.  Therefore, the decomposition in (\ref{eq:decomp}) becomes
\begin{equation*}
(I-u\tilde P)(I-u\tilde\tau) \sim \begin{bmatrix}I-\frac{u}{d-1} A +\frac{u^2}{d-1}I&*\\0&\left(1-\frac{u^2}{(d-1)^2}\right)I  \end{bmatrix}.
\end{equation*}

Noting that $\tilde \tau$ can be thought of as block diagonal with $m$ blocks of the form $\begin{bmatrix} 0 &1/(d-1)\\ 1/(d-1) &0 \end{bmatrix}$, then taking determinants, we find that
\[
\det(I-u\tilde P) \left(1-\frac{u^2}{(d-1)^2}\right)^m = \left(1-\frac{u^2}{(d-1)^2}\right)^{2m-n}\det\left(I-\frac{u}{d-1}A + \frac{u^2}{d-1}I\right)
\]
and hence
\[
\det(I-u\tilde P) = \left(1-\left(\frac{u}{d-1}\right)^2\right)^{m-n}\prod_{i=1}^n\left(1-\frac{\lambda_i}{d-1} u + \frac{1}{d-1}u^2\right)
\]
where the product ranges over all the eigenvalues $\lambda_i$ of the adjacency matrix $A$ for $i=1,\cdots,n$.  As remarked previously, the left hand side $\det(I-u\tilde P)$ is the characteristic polynomial of $\tilde P$ evaluated at $1/u$, so from this we obtain the spectrum of $\tilde P$.
\begin{theorem}
Let $G$ be a $d$-regular graph with $m$ edges and $n$ vertices, and let $\tilde P$ be the $2m\times 2m$ transition probability matrix for a non-backtracking random walk as defined above.  Then the eigenvalues of $\tilde P$ are
\[
\pm\frac{1}{d-1}, \, \frac{\lambda_i\pm\sqrt{\lambda_i^2 - 4(d-1)}}{2(d-1)}, \, (i = 1,\cdots,n)
\]
where $\lambda_i$ ranges over the eigenvalues of the adjacency matrix $A$, and $\pm1/(d-1)$ each have multiplicity $m-n$.  
\end{theorem}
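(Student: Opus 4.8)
The plan is to extract the spectrum of $\tilde P$ directly from the polynomial identity obtained just above the statement,
\[
\det(I-u\tilde P)=\left(1-\left(\tfrac{u}{d-1}\right)^2\right)^{m-n}\prod_{i=1}^n\left(1-\tfrac{\lambda_i}{d-1}\,u+\tfrac{1}{d-1}\,u^2\right),
\]
combined with the elementary observation that, over $\C$, one always has $\det(I-u\tilde P)=\prod_{j=1}^{2m}(1-\mu_j u)$, where $\mu_1,\dots,\mu_{2m}$ are the eigenvalues of $\tilde P$ listed with multiplicity (triangularize $\tilde P$; then $I-u\tilde P$ is triangular with diagonal entries $1-\mu_j u$). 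So the whole task reduces to factoring the right-hand side into $2m$ linear factors of the form $(1-cu)$ and reading off the multiset of the $c$'s.

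First I would factor the first term as $1-(u/(d-1))^2=(1-\tfrac{u}{d-1})(1+\tfrac{u}{d-1})$, which contributes $(1-\tfrac{u}{d-1})^{m-n}(1+\tfrac{u}{d-1})^{m-n}$, i.e. the eigenvalues $+\tfrac1{d-1}$ and $-\tfrac1{d-1}$, each with multiplicity $m-n$. For each $i$ I then write the quadratic as $1-\tfrac{\lambda_i}{d-1}u+\tfrac1{d-1}u^2=(1-\alpha_i u)(1-\beta_i u)$, so that $\alpha_i+\beta_i=\tfrac{\lambda_i}{d-1}$ and $\alpha_i\beta_i=\tfrac1{d-1}$; hence $\alpha_i,\beta_i$ are the roots of $z^2-\tfrac{\lambda_i}{d-1}z+\tfrac1{d-1}$, and the quadratic formula together with the simplification $\sqrt{\tfrac{\lambda_i^2}{(d-1)^2}-\tfrac{4}{d-1}}=\tfrac1{d-1}\sqrt{\lambda_i^2-4(d-1)}$ gives $\{\alpha_i,\beta_i\}=\left\{\dfrac{\lambda_i\pm\sqrt{\lambda_i^2-4(d-1)}}{2(d-1)}\right\}$. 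Each eigenvalue $\lambda_i$ of $A$ thus contributes this pair to the spectrum of $\tilde P$.

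To conclude, I would do a degree count: the left side has degree $2m$ in $u$ (its leading coefficient is $\det\tilde P$, which from the right side equals $(-1)^{m-n}(d-1)^{-(2m-n)}\neq0$; in particular $0$ is not an eigenvalue and none of the $c$'s above vanish, consistent with $\alpha_i\beta_i=\tfrac1{d-1}\neq0$), while the right side is a product of one factor of degree $2(m-n)$ and $n$ quadratics, hence also of degree $2m=2(m-n)+2n$. So the $2m$ values listed — $\pm\tfrac1{d-1}$ each with multiplicity $m-n$, and $\dfrac{\lambda_i\pm\sqrt{\lambda_i^2-4(d-1)}}{2(d-1)}$ for $i=1,\dots,n$ — exhaust the spectrum of $\tilde P$ with multiplicity, which is the assertion. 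I do not expect a genuine obstacle here: the only points needing care are the bookkeeping of multiplicities (handled by the degree count) and the discriminant simplification, together with the understanding that the statement is an identity of multisets, so that coincidences among the listed values genuinely add to the multiplicity — for instance $\tfrac1{d-1}$ also arises from $\lambda_1=d$, since $\tfrac{d\pm(d-2)}{2(d-1)}\in\{1,\tfrac1{d-1}\}$, which incidentally recovers the Perron eigenvalue $1$ of $\tilde P$.
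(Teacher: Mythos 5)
Your proposal is correct and follows exactly the route the paper takes: the paper derives the identity $\det(I-u\tilde P)=\bigl(1-(\tfrac{u}{d-1})^2\bigr)^{m-n}\prod_i\bigl(1-\tfrac{\lambda_i}{d-1}u+\tfrac{1}{d-1}u^2\bigr)$ and then simply asserts that the spectrum can be read off, which is precisely the factor-and-read-off-reciprocal-roots step you carry out. Your explicit degree count and the remark about multiset coincidences are details the paper leaves implicit, but there is no difference in method.
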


From this we obtain the result from \cite{alon}.

\begin{corollary}\label{thm:alon}
Let $G$ be a non-bipartite, connected $d$-regular graph on $n$ vertices for $d\geq3$, and let $\rho$ and $\tilde \rho$ denote the mixing rates of simple and non-backtracking random walk on $G$, respectively.  Let $\lambda$ be the second largest eigenvalue of the adjacency matrix of $G$ in absolute value.  

If $\lambda \geq 2\sqrt{d-1}$, then 
\[
\frac{d}{2(d-1)} \leq \frac{\tilde \rho}{\rho}\leq 1.
\]

If $\lambda < 2\sqrt{d-1}$ and $d=n^{o(1)}$, then
\[
\frac{\tilde \rho}{\rho} = \frac{d}{2(d-1)} + o(1).
\]
\end{corollary}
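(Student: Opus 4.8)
The plan is to read off the mixing rates of both walks from their spectra and compare.

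First, the simple random walk: since $G$ is $d$-regular, $P=\tfrac1d A$, so its eigenvalues are $\lambda_i/d$, with $\lambda_1/d=1$ simple and $|\lambda_i/d|<1$ for $i\ge2$ by connectedness and non-bipartiteness; hence by Corollary 5.2 of \cite{lovasz}, $\rho=\lambda/d$, where $\lambda=\max_{i\ge2}|\lambda_i|<d$ (the strict inequality again from Perron--Frobenius and non-bipartiteness). For the non-backtracking walk I would start from the spectrum in the preceding theorem. Besides $\pm\frac1{d-1}$ (multiplicity $m-n$ each), $\tilde P$ has, for each adjacency eigenvalue $\lambda_i$, the pair $\mu_i^\pm=\frac{\lambda_i\pm\sqrt{\lambda_i^2-4(d-1)}}{2(d-1)}$, and a one-line computation gives $\mu_i^+\mu_i^-=\frac1{d-1}$. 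This makes the modulus of the pair transparent: if $|\lambda_i|\le 2\sqrt{d-1}$ it is a complex-conjugate pair of modulus $(d-1)^{-1/2}$ (which is $<1$ precisely because $d\ge3$), and if $|\lambda_i|>2\sqrt{d-1}$ it is a real pair whose larger element has modulus
\[
g\bigl(|\lambda_i|\bigr):=\frac{|\lambda_i|+\sqrt{\lambda_i^2-4(d-1)}}{2(d-1)},
\]
where $g$ is strictly increasing on $[2\sqrt{d-1},\infty)$ with $g(2\sqrt{d-1})=(d-1)^{-1/2}$ and $g(t)<1$ for $2\sqrt{d-1}\le t<d$. The Perron value $\lambda_1=d$ gives the eigenvalues $1$ and $\frac1{d-1}<(d-1)^{-1/2}$. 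Hence $1$ is the unique eigenvalue of $\tilde P$ of modulus $1$ and it is simple; since moreover $\tilde\pi=\mathbb1/\vol(G)$ is a stationary distribution of full support (Lemma~1 and Section~\ref{subsec:nb_walk}), $\tilde P$ is irreducible and aperiodic, $\tilde\rho$ is well defined, and $\tilde\rho=\max_{i\ne1}|\mu_i|$ --- the bound $\le$ being the $\chi$-squared estimate of Section~\ref{subsec:nb_walk}, and the reverse the standard lower bound for the $\limsup$-of-$t$-th-roots mixing rate of a finite Markov chain. Using that $g$ is increasing, this assembles to: $\tilde\rho=(d-1)^{-1/2}$ if $\lambda\le 2\sqrt{d-1}$, and $\tilde\rho=g(\lambda)$ if $\lambda\ge 2\sqrt{d-1}$.

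Second, I would split on $\lambda$ versus $2\sqrt{d-1}$. If $\lambda\ge2\sqrt{d-1}$, then
\[
\frac{\tilde\rho}{\rho}=\frac{g(\lambda)}{\lambda/d}=\frac{d}{2(d-1)}\left(1+\sqrt{1-\frac{4(d-1)}{\lambda^2}}\right),
\]
and the right-hand side is increasing in $\lambda$, equals $\frac{d}{2(d-1)}$ at $\lambda=2\sqrt{d-1}$, and tends to $1$ as $\lambda\to d$; since $2\sqrt{d-1}\le\lambda<d$ this gives $\frac{d}{2(d-1)}\le\frac{\tilde\rho}{\rho}\le 1$. If $\lambda<2\sqrt{d-1}$, then $\tilde\rho=(d-1)^{-1/2}$, so $\frac{\tilde\rho}{\rho}=\frac{d}{\lambda\sqrt{d-1}}>\frac{d}{2(d-1)}$; for the matching upper bound I would invoke the Alon--Boppana lower bound (as in \cite{alon}): a connected $d$-regular graph on $n$ vertices satisfies $\lambda_2\ge 2\sqrt{d-1}\,(1-o(1))$, with error governed by the reciprocal of the square of the diameter, and $d=n^{o(1)}$ forces the diameter (which is $\gtrsim \log n/\log d$) to tend to infinity. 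Thus $\lambda\ge\lambda_2=2\sqrt{d-1}(1-o(1))$ and, being also $<2\sqrt{d-1}$, we get $\lambda=2\sqrt{d-1}(1-o(1))$, whence $\frac{\tilde\rho}{\rho}=\frac{d}{2(d-1)}\cdot\frac1{1-o(1)}=\frac{d}{2(d-1)}+o(1)$, using $\frac{d}{2(d-1)}\le1$.

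The spectral bookkeeping above is routine, following from the preceding theorem together with elementary analysis of the quadratics $u^2-\lambda_i u+(d-1)$ and of the function $g$. The one genuinely external input --- and the only real obstacle --- is the Alon--Boppana bound used in the second case; exactly as in \cite{alon}, this is where and why the hypothesis $d=n^{o(1)}$ enters, nothing about $\tilde P$ itself requiring it. The one technical point worth handling carefully is the identification $\tilde\rho=\max_{i\ne1}|\mu_i|$: its upper half is the $\chi$-squared estimate already proved, and for the lower half one observes that for some directed edge $y$ the vector $\chi_y-\tilde\pi$ has a nonzero component along a (left) eigenvector of $\tilde P$ for the second-largest-modulus eigenvalue --- otherwise all the $\chi_y$ would agree on that eigenvector and force it to be a multiple of $\mathbb1$, contradicting its orthogonality to $\tilde\pi$ --- so the corresponding term of $(\chi_y-\tilde\pi)\tilde P^t$ decays at exactly that exponential rate, the polynomial factors of any Jordan blocks being irrelevant after taking $t$-th roots in the limit.
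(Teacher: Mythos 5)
Your proposal is correct and follows essentially the same route as the paper: read off the spectrum of $\tilde P$ from the preceding theorem, note that the complex pairs have modulus $(d-1)^{-1/2}$ while the real pairs satisfy $\frac{\lambda}{2(d-1)}\le g(\lambda)\le\frac{\lambda}{d}$, and invoke the Nilli/Alon--Boppana bound together with $d=n^{o(1)}$ in the subcritical case. You are somewhat more careful than the paper on two points it glosses over --- ruling out the $\pm\frac{1}{d-1}$ eigenvalues as candidates for the second-largest modulus, and justifying the matching lower bound $\tilde\rho\ge\max_{i\ne1}|\mu_i|$ for a non-diagonalizable $\tilde P$ --- but these are refinements of the same argument, not a different one.
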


\begin{proof}
We remark that the expression  $ \frac{\lambda+\sqrt{\lambda^2 - 4(d-1)}}{2(d-1)}$ is precisely the expression derived by Alon et al. in \cite{alon} for the mixing rate of a non-backtracking random walk on a regular graph, and we may proceed with the analysis of the convergence rate in the same way they do.  The convergence rate is given by the second largest eigenvalue of $\tilde P$, which will be obtained setting $\lambda$ to be the second largest eigenvalue of $A$.  Let $\mu$ be this eigenvalue. 

For $2\sqrt{d-1}\leq \lambda \leq d$ we have
\[
\frac{\lambda}{2(d-1)} < \frac{\lambda + \sqrt{\lambda^2 - 4(d-1)}}{2(d-1)} \leq \frac{\lambda}{d}.
\]
So $\frac{\lambda}{2(d-1)} \leq \mu\leq \frac{\lambda}{d}$.  Since $\frac{\lambda}{d}$ is the second largest eigenvalue of the transition probability matrix $P$ for the usual walk, the first case follows.  

For $\lambda < 2\sqrt{d-1}$, $\mu$ is complex, and we obtain
\[
|\mu|^2 = \left|\frac{\lambda + \sqrt{\lambda^2 - 4(d-1)}}{2(d-1)}\right|^2 = \left(\frac{\lambda}{2(d-1)}\right)^2 + \left(\frac{\sqrt{4(d-1) - \lambda^2 }}{2(d-1)}\right)^2  = \frac{1}{d-1}
\]
so $|\mu| = \frac{1}{\sqrt{d-1}}$.

We remark that in this case that $\lambda < 2\sqrt{d-1}$, a classic result of Nilli (\cite{nilli}) related to the Alon-Boppana Theorem implies that we are never too far below this bound.  Indeed, the result states that if $G$ is $d$-regular with diameter at least $2(k+1)$, then $\lambda \geq 2\sqrt{d-1} - \frac{2\sqrt{d-1}-1}{k+1}$.  With the restriction that $d = n^{o(1)}$, then the diameter is at least $(1-o(1))\log_{d-1}n$, and so $\lambda > (1-o(1))2\sqrt{d-1}$, and the second case follows.   
\end{proof}

\subsection{Biregular graphs}
A graph $G$ is called \emph{$(c,d)$-biregular} if it is bipartite and each vertex in one part of the bipartition has degree $c$, and each vertex of the other part has degree $d$.  In the weighted Ihara's Theorem, we have $\tilde \tau^2 ((u,v),(u,v) = \frac{1}{(d_u-1)(d_v-1)}$, so in the case where $G$ is $(c,d)$-biregular, then we have $\tilde\tau^2 = \frac{1}{(c-1)(d-1)}I$.  So since $\tilde\tau^2$ is a multiple of the identity, as with regular graphs, in the decomposition (\ref{eq:decomp}), the $u^2\tilde\tau^2$ term can be taken to the other side of the equation.  Note that $\tilde D$ is diagonal with $\tilde D(u,u) = \sum_{v\sim u}\frac{1}{(d_u-1)(d_v-1)} = \frac{c}{(c-1)(d-1)}$ if $u$ has degree $c$, or $\frac{d}{(c-1)(d-1)}$ if $u$ has degree $d$.  Then $\tilde D - \tilde\tau^2$ is diagonal with entry $\frac{c}{(c-1)(d-1)} - \frac{1}{(c-1)(d-1)} = \frac{1}{d-1}$ or $\frac{d}{(c-1))(d-1)} - \frac{1}{(c-1)(d-1)} = \frac{1}{c-1}.$  Hence the decomposition (\ref{eq:decomp})  becomes
\[
(I-u\tilde P)(I-u\tilde\tau) \sim \begin{bmatrix}I - u\begin{bmatrix}0&\frac{1}{c-1}M\\\frac{1}{d-1}M^T&0\end{bmatrix} + u^2\begin{bmatrix}\frac{1}{d-1}I&0\\0&\frac{1}{c-1}I\end{bmatrix} &*\\0&\left(1-\frac{u^2}{(c-1)(d-1)}\right) I\end{bmatrix}
\]
where $A = \begin{bmatrix}0&M\\M^T&0\end{bmatrix}$ is the adjacency matrix of $G$.

Note that $\tilde\tau$ is similar to a block diagonal matrix with blocks of the form $\begin{bmatrix}0&1/(c-1)\\1/(d-1)&0\end{bmatrix}$, so taking the determinant above we obtain
\[\begin{split}
\det(I - u\tilde P)\left(1-\frac{u^2}{(c-1)(d-1)}\right)^m &= \\\left(1-\frac{u^2}{(c-1)(d-1)}\right)^{2m-n}&\det\left(I - u\begin{bmatrix}0&\frac{1}{c-1}M\\\frac{1}{d-1}M^T&0\end{bmatrix} + u^2\begin{bmatrix}\frac{1}{d-1}I&0\\0&\frac{1}{c-1}I\end{bmatrix} \right)
\end{split}\]
so
\[
\det(I-u\tilde P) = \left(1+\frac{u^2}{(c-1)(d-1)}\right)^{m-n}\det\begin{bmatrix}\left(1+\frac{u^2}{d-1}\right)I&\frac{u}{c-1}M\\\frac{u}{d-1}M^T& \left(1-\frac{u^2}{c-1}\right)I\end{bmatrix}
\]
We will look at the matrix $\begin{bmatrix}\left(1+\frac{u^2}{d-1}\right)I&\frac{u}{c-1}M\\\frac{u}{d-1}M^T& \left(1+\frac{u^2}{c-1}\right)I\end{bmatrix}$. Suppose the first part in the bipartition of $G$ has size $r$, and the second part has size $s$, where without loss of generality, $r>s$.  By row reduction, this has the same determinant as the matrix
\[
\begin{bmatrix}\left(1+\frac{u^2}{d-1}\right)I&\frac{u}{c-1}M\\0& \left(1-\frac{u^2}{c-1}\right)I - \frac{1}{1+\frac{u^2}{d-1}}\frac{u^2}{(c-1)(d-1)}M^TM\end{bmatrix}
\]
which is
\[\begin{split}
&\left(1+\frac{u^2}{d-1}\right)^r\det\left(\left(1+\frac{u^2}{c-1}\right)I - \frac{1}{1+\frac{u^2}{d-1}}\frac{1}{(c-1)(d-1)}M^TM\right) \\ &= \left(1+\frac{u^2}{d-1}\right)^{r-s}\det\left(\left(1+\frac{u^2}{c-1}\right)\left(1+\frac{u^2}{d-1}\right)I -\frac{u^2}{(c-1)(d-1)}M^TM\right).
\end{split}\]

Now, the above determinant is given by the product of the eigenvalues of the matrix.  Observe that if $\lambda$ is an eigenvalue of the adjacency matrix $A$, then $\lambda^2$ is an eigenvalue of $M^TM$.  Therefore, in all we have
\[\begin{split}
\det(I-u\tilde P) =& \left(1-\frac{u^2}{(c-1)(d-1)}\right)^{m-n}\left(1+\frac{u^2}{d-1}\right)^{r-s}\\&\times\prod_{i=1}^s\left(\left(1+\frac{u^2}{c-1}\right)\left(1+\frac{u^2}{d-1}\right) - \frac{\lambda_i^2u^2}{(c-1)(d-1)}\right)
\end{split}\]
where the product ranges over the $s$ largest eigenvalues of $A$ (or in other words, $\lambda_i^2$ ranges of the $s$ eigenvalues of $M^TM$).  Therefore the characteristic polynomial is given by
\[\begin{split}
\det(uI - \tilde P) =& \left(u^2 - \frac{1}{(c-1)(d-1)}\right)^{m-n}\left(u^2 + \frac{1}{d-1}\right)^{r-s}\\&\times\prod_{i=1}^s\left(\left(u^2 + \frac{1}{c-1}\right)\left(u^2 + \frac{1}{d-1}\right) - \frac{\lambda_i^2u^2}{(c-1)(d-1)}\right).
\end{split}\]
Thus we can explicitly obtain the eigenvalues of $\tilde P$.

\begin{theorem}
Let $G$ be a $(c,d)$-biregular graph, let the part with degree $c$ have size $r$, and the part with degree $d$ have size $s$, and assume without loss of generality that $r\geq s$.  Suppose $G$ has $n$ vertices and $m$ edges.  Then the eigenvalues of the non-backtracking transition probability matrix $\tilde P$ defined above are
\[
\pm\frac{1}{\sqrt{(c-1)(d-1)}} \text{ with multiplicity }m-n \text{ each }, \, \pm\frac{1}{\sqrt{d-1}}i \text{ with multiplicity }r-s\text{ each }
\]
as well as the $4$ roots of the polynomial
\[
u^4 + \left(\frac{1}{(c-1)}+\frac{1}{(d-1)}-\frac{\lambda_i^2}{(c-1)(d-1)}\right)u^2 + \frac{1}{(c-1)(d-1)}
\]
for each value of $\lambda_i$ ranging over the $s$ positive eigenvalues of the adjacency matrix $A$.  These roots are
\begin{equation}\label{eigvals}
\pm\sqrt{\frac{\lambda_i^2 - (c-1) - (d-1)\pm\sqrt{(\lambda_i^2 - (c-1) - (d-1))^2-4(c-1)(d-1)}}{2(c-1)(d-1)}}, \, (i=1,\cdots,s).
\end{equation}
\end{theorem}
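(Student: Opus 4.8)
The plan is to read the eigenvalues of $\tilde P$ directly off the characteristic polynomial
\[
\det(uI - \tilde P) = \left(u^2 - \frac{1}{(c-1)(d-1)}\right)^{m-n}\left(u^2 + \frac{1}{d-1}\right)^{r-s}\prod_{i=1}^s\left(\left(u^2 + \frac{1}{c-1}\right)\left(u^2 + \frac{1}{d-1}\right) - \frac{\lambda_i^2 u^2}{(c-1)(d-1)}\right)
\]
already derived in the discussion preceding the statement, by factoring it completely. The first factor vanishes precisely at $u = \pm\frac{1}{\sqrt{(c-1)(d-1)}}$, each with multiplicity $m-n$; the second vanishes precisely at $u = \pm\frac{1}{\sqrt{d-1}}i$, each with multiplicity $r-s$. (This is where the standing assumption $r \ge s$ is used, so that the exponent $r-s$ is nonnegative.)

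For the $i$-th quartic factor I would expand it to $u^4 + \big(\frac{1}{c-1} + \frac{1}{d-1} - \frac{\lambda_i^2}{(c-1)(d-1)}\big)u^2 + \frac{1}{(c-1)(d-1)}$, recognizing the polynomial displayed in the theorem, and then solve it as a quadratic in $v = u^2$. The linear coefficient rewrites as $-\frac{\lambda_i^2 - (c-1) - (d-1)}{(c-1)(d-1)}$ and the discriminant as $\frac{(\lambda_i^2 - (c-1) - (d-1))^2 - 4(c-1)(d-1)}{(c-1)^2(d-1)^2}$; pulling the factor $(c-1)(d-1)$ through the square root and combining with the $2$ in the quadratic formula yields
\[
v = \frac{\lambda_i^2 - (c-1) - (d-1) \pm \sqrt{(\lambda_i^2 - (c-1) - (d-1))^2 - 4(c-1)(d-1)}}{2(c-1)(d-1)},
\]
and then $u = \pm\sqrt{v}$ for each choice of sign gives the four roots listed in \eqref{eigvals}. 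Here $\lambda_i^2$ ranges over the $s$ eigenvalues of $M^TM$; since $A = \begin{bmatrix}0 & M \\ M^T & 0\end{bmatrix}$ has spectrum symmetric about $0$, these are the squares of the $s$ largest eigenvalues of $A$ (the ``$s$ positive eigenvalues''), exactly as was used when the characteristic polynomial was obtained. Finally I would record the degree count: $2(m-n) + 2(r-s) + 4s = 2m - 2n + 2r + 2s = 2m$, using $r + s = n$, so these are all the eigenvalues of the $2m\times 2m$ matrix $\tilde P$.

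There is no real obstacle here: the substantive work — the change of basis in \eqref{eq:decomp} and the block row reductions producing the characteristic polynomial — is already complete, and everything that remains is elementary algebra. The only steps needing a little care are the simplification of the discriminant into the clean radical form of \eqref{eigvals} (clearing the common factor $(c-1)(d-1)$ under and outside the square root) and the multiplicity and degree bookkeeping above.
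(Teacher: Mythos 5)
Your proposal is correct and follows essentially the same route as the paper: the paper's own argument is exactly the derivation of the characteristic polynomial $\det(uI-\tilde P)$ in the discussion preceding the theorem, after which the eigenvalues are read off by factoring, just as you do. Your explicit quadratic-in-$u^2$ computation, the simplification of the discriminant, and the degree count $2(m-n)+2(r-s)+4s=2m$ supply the elementary algebra the paper leaves implicit.
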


We can now give a version of Corollary \ref{thm:alon} for $(c,d)$-biregular graphs.

\begin{corollary}
Let $G$ be  a $(c,d)$-biregular graph with $c,d \geq 2$.  Let $\rho = \lambda^2/cd$ be the square of the second largest eigenvalue of the transition probability matrix $P$ for a random walk on $G$, and let $\tilde \rho = |\mu|^2$ be the square of the second largest modulus of an eigenvalue of $\tilde P$.  Let $\lambda$ be the second largest eigenvalue of the adjacency matrix of $G$.  Then we have the following cases.

If $\lambda > \sqrt{c-1}+\sqrt{d-1}$, then
\[
\frac{cd}{2(c-1)(d-1)}\left(1-\frac{c-1+d-1}{c-1+2\sqrt{(c-1)(d-1)}+d-1}\right)\leq \frac{\tilde \rho}{\rho} \leq 1.
\]

If $\lambda < \sqrt{c-1}+\sqrt{d-1}$ and both $c$ and $d$ are $n^{o(1)}$, then
\[
\frac{\tilde \rho}{\rho} \leq \frac{cd}{2(c-1)(d-1)} + o(1).
\]
\end{corollary}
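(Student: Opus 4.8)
The plan is to read the spectrum of $\tilde P$ off the previous theorem and locate the second‑largest modulus. Besides the fixed eigenvalues $\pm\tfrac{1}{\sqrt{(c-1)(d-1)}}$ and $\pm\tfrac{i}{\sqrt{d-1}}$ from that theorem, each positive eigenvalue $\lambda_i$ of $A$ strictly below the Perron value $\sqrt{cd}$ contributes the four roots in (\ref{eigvals}); these split into two conjugate pairs whose squared moduli multiply to $\tfrac{1}{(c-1)(d-1)}$, so each such $\lambda_i$ yields a $\tilde P$‑eigenvalue of squared modulus
\[
g(\lambda_i^2):=\left|\frac{\lambda_i^2-(c-1)-(d-1)+\sqrt{\bigl(\lambda_i^2-(c-1)-(d-1)\bigr)^2-4(c-1)(d-1)}}{2(c-1)(d-1)}\right|\ \ge\ \frac{1}{\sqrt{(c-1)(d-1)}},
\]
which already dominates those fixed eigenvalues (here one uses $c\le d$, forced by $r\ge s$). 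I would then take $\lambda_i=\lambda$, note that on the real branch $g$ is increasing in $\lambda_i^2$, and argue $\tilde\rho=g(\lambda^2)$; the two cases of the corollary are exactly whether the discriminant $\bigl(\lambda^2-(c-1)-(d-1)\bigr)^2-4(c-1)(d-1)$ is $\ge 0$, i.e.\ whether $\lambda\ge\sqrt{c-1}+\sqrt{d-1}$.

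For the first case, $\tilde\rho=g(\lambda^2)$ is the larger root of $(c-1)(d-1)t^2-\bigl(\lambda^2-(c-1)-(d-1)\bigr)t+1$. The upper bound $\tilde\rho\le\rho=\lambda^2/cd$ I would get by clearing denominators and the radical in $g(\lambda^2)\le\lambda^2/cd$: the inequality simplifies to $(c+d-1)(cd-\lambda^2)\bigl(\lambda^2+\tfrac{cd}{c+d-1}\bigr)\ge 0$, which holds since $\lambda\le\lambda_1=\sqrt{cd}$. For the lower bound, drop the nonnegative radical to get $\tilde\rho\ge\frac{\lambda^2-(c-1)-(d-1)}{2(c-1)(d-1)}$; writing $P=(c-1)+(d-1)$ and $Q=2\sqrt{(c-1)(d-1)}$ and substituting $\rho=\lambda^2/cd$, the claimed bound $\tilde\rho/\rho\ge\frac{cd}{2(c-1)(d-1)}\cdot\frac{Q}{P+Q}$ collapses to $P\bigl(\lambda^2-P-Q\bigr)\ge 0$, i.e.\ exactly the case hypothesis $\lambda^2>(\sqrt{c-1}+\sqrt{d-1})^2$.

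For the second case, the pair of roots (\ref{eigvals}) attached to $\lambda$ is complex (for a family, an Alon–Boppana bound places $\lambda$ in the window $(\sqrt{d-1}-\sqrt{c-1})^2\le\lambda^2\le(\sqrt{c-1}+\sqrt{d-1})^2$ where this happens), so $\tilde\rho=g(\lambda^2)=\frac{1}{\sqrt{(c-1)(d-1)}}$ exactly and $\tilde\rho/\rho=\frac{cd}{\lambda^2\sqrt{(c-1)(d-1)}}$. To bound this by $\frac{cd}{2(c-1)(d-1)}+o(1)$ it suffices that $\lambda^2\ge 2\sqrt{(c-1)(d-1)}(1-o(1))$, and here I would invoke the bipartite analogue of Nilli's theorem: when $c,d=n^{o(1)}$ the diameter of a large $(c,d)$‑biregular graph grows, forcing $\lambda\ge(1-o(1))(\sqrt{c-1}+\sqrt{d-1})$, hence $\lambda^2\ge(1-o(1))\bigl((c-1)+(d-1)+2\sqrt{(c-1)(d-1)}\bigr)\ge(1-o(1))\,4\sqrt{(c-1)(d-1)}$ by AM–GM, which is more than enough.

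The step I expect to require the most care is the identification $\tilde\rho=g(\lambda^2)$ itself, i.e.\ that the second‑largest modulus over \emph{all} eigenvalues of $\tilde P$ is realized at $\lambda=\lambda_2$ and not at some other eigenvalue of $A$. The function $g$ is not monotone: it equals $\tfrac{1}{\sqrt{(c-1)(d-1)}}$ throughout the ``complex window'' but rebounds as $\lambda_i^2\to 0$, with $g(0)=\tfrac{1}{\min(c,d)-1}$; thus a biregular graph whose biadjacency matrix is singular has a $\tilde P$‑eigenvalue of squared modulus $\tfrac{1}{\min(c,d)-1}$, which can exceed $g(\lambda^2)$ once $\lambda$ is near $\sqrt{c-1}+\sqrt{d-1}$ and $d\gg c$, threatening the upper bounds above. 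So the clean statement really wants either $A$ nonsingular (or $c,d$ comparable), or the convention that $\tilde\rho$ is the squared modulus of the $\tilde P$‑eigenvalue coming specifically from $\lambda_2$; granting that, the displayed estimates finish the proof.
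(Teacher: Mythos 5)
Your proposal is correct and follows essentially the same route as the paper: read the spectrum of $\tilde P$ off the preceding theorem, evaluate (\ref{eigvals}) at the second largest adjacency eigenvalue, split on whether the discriminant is nonnegative, and invoke the Feng--Li bipartite Alon--Boppana bound in the complex case; your explicit factorizations $(c+d-1)(cd-\lambda^2)\bigl(\lambda^2+\tfrac{cd}{c+d-1}\bigr)\ge 0$ and $P(\lambda^2-P-Q)\ge 0$ just fill in what the paper dismisses as ``direct computation.'' The caveat you raise about whether $\tilde\rho$ is really realized by the eigenvalue attached to $\lambda_2$ (rather than by a zero eigenvalue of $M^TM$ or the $\pm i/\sqrt{d-1}$ block when $r>s$) is legitimate, but the paper silently adopts exactly the convention you propose---its proof simply sets $\mu$ equal to expression (\ref{eigvals}) at $\lambda=\lambda_2$ and never compares against the other eigenvalues---so your reading is the one in force.
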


\begin{proof}

We need to compare the eigenvalues of $\tilde P$ to the eigenvalues of $P = D^{-1}A = \begin{bmatrix}0&\frac1c M\\\frac1d M^T&0\end{bmatrix}$. Note that for $\lambda$ an eigenvalue of $A$, we have
\[
\begin{bmatrix}0&M\\M^T&0\end{bmatrix}\begin{bmatrix}x\\y\end{bmatrix} = \lambda\begin{bmatrix}x\\y\end{bmatrix}
\]
which implies $My = \lambda x$ and $M^T x = \lambda y$.  Then observe
\[
\begin{bmatrix}0&\frac1c M\\\frac1d M^T&0\end{bmatrix}\begin{bmatrix}\frac{1}{\sqrt c}x\\\frac{1}{\sqrt d}y\end{bmatrix} =\begin{bmatrix} \frac{1}{c\sqrt d}My\\\frac{1}{d\sqrt c }M^T x\end{bmatrix} = \frac{\lambda}{\sqrt{cd}}\begin{bmatrix}\frac{1}{\sqrt c}x\\\frac{1}{\sqrt d}y\end{bmatrix},
\]
so the eigenvalues of $P$ are $\lambda/\sqrt{cd}$ where $\lambda$ ranges over the eigenvalues of $A$.  Note that the largest eigenvalue of $A$ is $\sqrt{cd}.$

Let $\mu$ equal the expression (\ref{eigvals}), and consider the following cases.

If $\sqrt{c-1}+\sqrt{d-1} \leq \lambda \leq \sqrt{cd}$, then $\mu$ is real.  Direct computation verifies that, evaluating the expression (\ref{eigvals}) at $\lambda = \sqrt{cd}$ yields $\mu = 1 = \lambda/\sqrt{cd}$ and $\mu < \lambda/\sqrt{cd}$ for $\lambda$ in this range.  Therefore, in this case the eigenvalue of $\tilde P$ always has smaller absolute value than the corresponding eigenvalue of $P$, implying $\tilde\rho \leq\rho$. The lower bound follows from (\ref{eigvals}) ignoring the square root inside.  Thus the first case follows.  

If $\lambda < \sqrt{c-1}+\sqrt{d-1}$, then $\mu$ is complex, and direct computation shows 
\[
|\mu|^2 = \frac{1}{\sqrt{(c-1)(d-1)}},
\]
so 
\[
|\mu| = \frac{1}{((c-1)(d-1))^{1/4}}
\]

A version of the Alon-Boppana Theorem exists for $(c,d)$-biregular graphs as well, proven by Feng and Li in \cite{Feng_Li} (see also \cite{Li}).

\begin{theorem}[\cite{Feng_Li}]\label{thm:cd_alon_boppana}
Let $G$ be a $(c,d)$-biregular graph, and let $\lambda$ be the second largest eigenvalue of the adjacency matrix $A$ of $G$.  Then
\[
\lambda^2 \geq \left(\sqrt{c-1} + \sqrt{d-1}\right)^2 - \frac{2\sqrt{(c-1)(d-1)}-1}{k}
\]
where the diameter of $G$ is greater than $2(k+1)$.
\end{theorem}

Observe that certainly the diameter is at least $\log_{cd}n$, so that the condition on the degrees and Theorem \ref{thm:cd_alon_boppana} imply that \[\lambda^2 \geq 2\sqrt{(c-1)(d-1)}(1-o(1)).\]  As $|\mu|^2 = \frac{1}{\sqrt{(c-1)(d-1)}}$, so this gives the result for the second case.
\end{proof}

\section{Conclusion}
We have looked at non-backtracking random walks from the point of view of walking along directed edges.  For the special cases of regular and biregular graphs, our weighted version of Ihara's Theorem (Theorem \ref{thm:weighted_ihara}) has given us the comlete spectrum of the transition probability matrix for the non-bakctracking walk, allowing for easy comparison between the non-backracking mixing rate, and the mixing rate of the usual random walk.  Clearly, it would be desirable to extend these reults to more general classes of graphs.  The difficulty in applying Theorem \ref{thm:weighted_ihara} directly is with the term involving $\tilde\tau^2$.  As seen in section \ref{sec:ihara}, $\tilde\tau^2$ is a $2m\times 2m$ diagonal matrix with
\[
\tilde\tau^2\left((u,v),(u,v)\right) = \frac{1}{(d_u - 1)(d_v - 1)}.
\]
In the case of regular and biregular graphs, this expression is constant (we get $1/(d-1)$ and $1/(c-1)(d-1)$ for the $d$-regular and $(c,d)$-biregular cases respectively), making $\tilde\tau^2$ simply a multiple of the identity.  This allows the difficulty to be handled relatively easily.  Regular and biragular graphs are in fact the only graphs for which $\tilde\tau^2$ is a multiple of the identity, suggesting that these exact techniques will not work as nicely on more general classes of graphs.  If a cleaner version of Theorem \ref{thm:weighted_ihara} could be proven, then, aside from being interesting in its own rite, it could potentially be used to extend our results on non-backtracking random walks.  \\

{\bf Acknowledgment.} The author would like to thank Fan Chung for numerous helpful discussions throughout the process of writing this paper.



\begin{thebibliography}{}

\bibitem{alon}
N. Alon, I. Benjamini, E. Lubetzky, and S. Sodin, Non-backtracking random walks mix faster, {\it Communications in Contemporary Mathematics}, {\bf 09}, (2007) 585.

\bibitem{angel}
O. Angel, J. Friedman, and S. Hoory, The non-backtracking spectrum of the universal cover of a graph, {\it Transactions of the American Mathematical Society}, {\bf 326}(6), (2015) 4287--4318.

\bibitem{bass}
H. Bass, the Ihara-Selberg zeta function of a tree lattice, {\it Internat. J. Math.}, {\bf 3} (1992), 717--797.

\bibitem{directed}
F. Chung,  Laplacians and the Cheeger inequality for directed graphs, {\it Annals of Combinatorics}, {\bf 9} (2005), 1--19.

\bibitem{fan}
F. Chung, {\it Spectral Graph Theory}, AMS Publications, 1997.

\bibitem{Feng_Li}
K. Feng and W.-C. W. Li, Spectra of hypergraphs and applications, {\it Journal of Number Theory}, 60(1), (1996) 1--22.

\bibitem{Fitzner}
R. Fitzner and R. van der Hofstad, Non-backtracking random walk, {\it J. Stat. Phys.}, {\bf 150}(2), (2013) 264--284.

\bibitem{hashimoto}
K. Hishimoto, Artin-type $L$-functinos and the density theorem for prime cycles on finite graphs, {\it Internat. J. Math.}, {\bf 3} (1992), 809--826.

\bibitem{ihara}
Y. Ihara, On discrete subgroups of the two by two projective linear group over p-adic fields, {\it J. Math. Soc. Japan}, {\bf 18} (1966), 219--235.

\bibitem{zeta}
M. Kotani and T. Sunada, Zeta functions of finite graphs, {\it J. Math. Sci. Univ. Tokyo },
{\bf 7} (2000), 7-25.

\bibitem{redemption}
F. Krzakala, C. Moore, E. Mossel, J. Neeman, A. Sly, L. Zdeborova, and P. Zhang, Spectral redemption in clustering sparse networks, {\it Proceedings of the National Academy of Sciences}, {\bf 110}(52), (2013) 20935--20940.

\bibitem{Li}
W.-C. W. Li and P. Sol\'e, Spectra of regular graphs and hypergraphs and orthogonal polynomials.  {\it European Journal of Combinatorics}, 17(5), (1996) 461--477.

\bibitem{lovasz}
L. Lov\'asz, Random walks on graphs: a survey, {\it Combinatorics, Paul Erd\H os is Eighty (Volume 2),} Keszthely (Hungary) (1993) pp.1-46.

\bibitem{nilli}
A. Nilli, On the second eigenvallue of a graph. {\it Discrete Math.}, 91 (1991), 207-210.

\bibitem{stark}
H. M. Stark and A. A. Terras, Zeta functions of finite graphs and coverings, {\it Advances in Mathematics}, {\bf 121}, (1996) 124--165.


\end{thebibliography}
\end{document}